\newcommand{\Z}{\mathbb{Z}}
\newcommand{\N}{\mathbb{N}}
\DeclareMathOperator{\pnt}{\raise 0.5mm \hbox{\large\textbf{.}}}
\DeclareMathSymbol{*}{\mathbin}{symbols}{"03}
\newcommand{\note}[2][ ]{}
\newtheorem*{rep@theorem}{\rep@title}
\newcommand{\newreptheorem}[2]{%
\newenvironment{rep#1}[1]{%
 \def\rep@title{#2 \ref{##1}}%
 \begin{rep@theorem}}%
 {\end{rep@theorem}}}
\newtheorem{theorem}{Theorem}[section]
\newtheorem{conjecture}[theorem]{Conjecture}
\theoremstyle{definition}
\newtheorem{remark}[theorem]{Remark}
\DeclarePairedDelimiter{\ceil}{\lceil}{\rceil}
\begin{document}
\title[On the density of the odd values of the partition function, II]{On the density of the odd values of the partition function, II: An infinite conjectural framework}

\author[Samuel D. Judge]{Samuel D. Judge}
\address{Department of Mathematical  Sciences, Michigan Tech, Houghton, MI  49931}
\email{sdjudge@mtu.edu}

\author{Fabrizio Zanello}
\address{Department of Mathematical  Sciences, Michigan Tech, Houghton, MI  49931}
\email{zanello@mit.edu; zanello@mtu.edu}

\thanks{2010 {\em Mathematics Subject Classification.} Primary: 11P83; Secondary:  05A17, 11P84, 11P82, 11F33.\\\indent 
{\em Key words and phrases.} Partition function; multipartition function; density odd values; partition identity; modular form modulo 2.}

\maketitle

\begin{abstract} We continue our study of a basic but seemingly intractable problem in integer partition theory, namely the conjecture that $p(n)$ is odd exactly $50\%$ of the time. Here, we greatly extend on our previous paper by providing a doubly-indexed, infinite framework of conjectural identities modulo 2, and show how to, in principle, prove each such identity. However, our conjecture remains open in full generality.

A striking consequence is that, under suitable existence conditions, if any $t$-multipartition function is odd with positive density and $t\not \equiv 0\ (\bmod\ 3)$, then $p(n)$ is also odd with positive density. These are all facts that appear virtually impossible to show unconditionally today.

Our arguments employ a combination of algebraic and analytic methods, including certain technical tools recently developed by Radu in his study of the parity of the Fourier coefficients of modular forms.

\end{abstract}

\section{Introduction}

Let $p(n)$ denote the number of \emph{partitions} of a nonnegative integer $n$, where a partition of $n$ is an unordered set of positive integers that sum to $n$. It is well known that the generating function of $p(n)$ is
\begin{equation}
\sum_{n=0}^{\infty} p(n) q^n =\frac{1}{\prod_{i=1}^{\infty} (1-q^i)}.\nonumber
\end{equation}
For any positive integer $t$, define $p_t(n)$ by 
\begin{equation}
\sum_{n=0}^{\infty} p_t(n)q^n = \frac{1}{\prod_{i=1}^{\infty} (1-q^i)^t}.\nonumber
\end{equation}
Hence, $p_t(n)$ denotes the number of $t$-\emph{multipartitions} (also called $t$-\emph{colored partitions}) of $n$. In particular, $p_1(n)=p(n)$. 

The goal of this paper is to provide additional insight into the long-standing question of estimating the density of the odd values of the partition function. As Paul Monsky once pointedly stated \cite{Monsky}, ``the best minds of our generation haven't gotten anywhere with understanding the parity of $p(n)$.'' It is a famous conjecture of Parkin and Shanks \cite{PaSh} that the even and the odd values of $p(n)$ are equidistributed (see also Section 4 of \cite{NiSa}). Currently, the best result, which has improved on the work of multiple authors (see for example \cite{Ahl,BelNic,Ei,Nic,Ono3}), is due to Bella\"iche, Green, and Soundararajan \cite{BGS} and states that the number of odd values of $p(n)$ for $n\leq x$ has at least the order of $\frac{\sqrt{x}}{\log\log x}$, for $x\to \infty$. In fact, their bound holds for any $t$-multipartition function $p_t(n)$, thus improving on a result of the second author (cf. \cite{Zan}, which includes an elementary proof for $t=3$). We note that the current record lower bound on the number of \emph{even} values of $p(n)$ has the order of $\sqrt{x}\log\log x$  (\cite{BelNic}. Unlike in the odd case, a lower bound of $\sqrt{x}$ for the even values is trivial).

We define the \emph{density} of the odd values of $p_t(n)$ in the natural way: 
\begin{equation}
\delta_t=\lim_{x\to \infty}\frac{\#\{n\leq x: p_t(n)\ \text{is odd}\}}{x},\nonumber
\end{equation} 
\noindent if the limit exists. Note that, for obvious parity reasons, it suffices to restrict our attention to the case of $t$ odd. Given the above bounds, we are still very far from showing that $\delta_t>0$, for any $t$. In fact, it is not even known at this time that any $\delta_t$ exists. 

In \cite{JKZ}, in collaboration with Keith, we generalized the conjecture of Parkin and Shanks (i.e., $\delta_1=1/2$) to every $t\geq 1$. Namely:
\begin{conjecture}[\cite{JKZ}, Conjecture 1]\label{JKZconj} The density $\delta_t$ exists and equals $1/2$, for any positive odd integer $t$. Equivalently, if $t = 2^k t_0$ with $t_0\ge 1$ odd, then $\delta_t$ exists and equals $2^{-k-1}$.
\end{conjecture}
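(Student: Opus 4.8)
The plan is not to attack $\delta_t = 1/2$ head-on, which seems hopeless by current methods, but to reduce it --- through an explicit, doubly-indexed family of congruences modulo $2$ --- to a single ``seed'' density statement, and then to show that each congruence in the family is, in principle, decidable by a finite computation.

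First I would dispose of the even case and of the overall normalization. Since $(1-q^i)^2 \equiv 1-q^{2i} \pmod 2$, one gets $\sum_n p_{2t}(n)q^n \equiv \sum_m p_t(m)\,q^{2m} \pmod 2$, so $p_{2t}(2m) \equiv p_t(m)$ and $p_{2t}(2m+1) \equiv 0 \pmod 2$; iterating, $p_{2^k t_0}$ is supported modulo $2$ on indices divisible by $2^k$, where $p_{2^k t_0}(2^k m) \equiv p_{t_0}(m)$. Hence $\delta_{2^k t_0} = 2^{-k}\delta_{t_0}$ whenever the latter exists, the two displayed forms of the conjecture are equivalent, and it suffices to prove $\delta_t = 1/2$ for $t$ odd. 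More generally, writing $t=\sum_{j\in S}2^j$ in binary, the same congruence $(1-q^i)^{2^j}\equiv 1-q^{2^j i}\pmod 2$ yields
\[
\sum_{n\ge 0}p_t(n)\,q^n \;\equiv\; \prod_{j\in S}\Bigl(\,\sum_{n\ge 0}p(n)\,q^{2^j n}\Bigr)\pmod 2 ,
\]
so that, modulo $2$, every multipartition series is a finite product of dilates of $\sum p(n)q^n$. This already shows that the parity of each $p_t$ is \emph{determined} by that of $p$ --- but \emph{not} that the densities are comparable, since the support of a product modulo $2$ is not controlled by the supports of its factors.

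To obtain genuine control I would instead build the array of conjectural identities at the level of arithmetic progressions, of the shape
\[
\sum_{n\ge 0}p_{t}(\ell\, n+r)\,q^{n}\;\equiv\;\sum_{n\ge 0}p_{t'}(\ell'\, n+r')\,q^{n}\pmod 2 ,
\]
possibly with one side twisted by a fixed finite product, each linking an admissible $t$ to a strictly simpler $t'$; such an identity preserves densities, and chaining them connects every $t$ coprime to $6$ back to the base case $t=1$. The residue class $t\equiv 0\pmod 3$ must be set apart because of the special role of $\prod_i(1-q^i)^3$, which by Jacobi's identity reduces modulo $2$ to the lacunary theta series $\sum_{k\ge 0}q^{k(k+1)/2}$; introducing a cubic factor introduces exactly this degenerate block, so the functions $p_{3s}$ form separate components of the framework and do not reflect the behaviour of $p$. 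To manufacture the identities, and to certify that each reduces to a finite verification, I would lean on Radu's recent machinery for the parity of the Fourier coefficients of modular forms --- effective bounds for membership of a reduced $q$-expansion in a prescribed space of forms modulo $2$, together with his transfer results between Hecke-related forms --- which turns each conjectural congruence into a check of finitely many coefficients, albeit at a cost that grows with $t$. This array of reductions, together with the claim that each link actually holds, is precisely the ``infinite conjectural framework'' of the title; proving the individual links (rather than merely reducing them to finite computations) is the true substance of the program.

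Granting the framework, Conjecture~\ref{JKZconj} for $t$ coprime to $6$ collapses to the single equality $\delta_1 = 1/2$, while positivity of the density for any one admissible $p_t$ propagates along the chain to give $\delta_1 > 0$ --- hence the striking consequence advertised in the introduction. The main obstacle is exactly this last, irreducible point: establishing even that $\delta_1 > 0$, i.e. that the weakly holomorphic modular form modulo $2$ attached to $\prod_i(1-q^i)^{-1}$ has positive-density support, lies well beyond current methods --- the best known lower bound for the number of odd values of $p(n)$ up to $x$ is only of order $\sqrt{x}/\log\log x$ (Bella\"iche--Green--Soundararajan \cite{BGS}). Thus the honest deliverable of this program is twofold: a rigorous reduction of the full conjecture to the behaviour of a single seed form, and the unconditional verification of as many of the individual links relating $t$ to $t'$ as the technical identities permit. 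The conjecture itself, as the abstract concedes, remains open in full generality.
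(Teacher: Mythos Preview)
The statement you are attempting is Conjecture~\ref{JKZconj}, which the paper does \emph{not} prove: it is an open problem recalled from \cite{JKZ}, and the paper's contribution is a conjectural framework yielding only conditional implications among the $\delta_t$ (the Corollaries to Conjectures~\ref{mainconj} and~\ref{biggerconj}), never the equality $\delta_t=1/2$ itself. Your proposal correctly handles the one genuinely provable part --- the equivalence of the two displayed formulations via $(1-q^i)^2\equiv 1-q^{2i}\pmod 2$, giving $\delta_{2^kt_0}=2^{-k}\delta_{t_0}$ --- but this is elementary and is exactly why the paper immediately restricts to odd $t$.

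Beyond that, your proposal is not a proof but a program sketch, and it overstates what such a program delivers. The identities you posit, of the shape $\sum_n p_t(\ell n+r)q^n\equiv\sum_n p_{t'}(\ell' n+r')q^n\pmod 2$ with a single series on each side, are \emph{not} what the paper conjectures: the identities in (\ref{CongOfForm2}) have on the right a finite \emph{sum} of several multipartition series with different indices, and such multi-term congruences do not transport the value $\delta_t=1/2$ from one index to another --- they only support the much weaker inference that if one summand has positive density then some other summand must as well, which is precisely how the paper's corollaries are argued. Hence your assertion that ``Conjecture~\ref{JKZconj} for $t$ coprime to $6$ collapses to the single equality $\delta_1=1/2$'' is unsupported: neither your hypothetical single-term identities nor anything in the paper reduces $\delta_t=1/2$ to $\delta_1=1/2$. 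You yourself concede at the end that the conjecture remains open; what remains of your write-up after the equivalence argument is a speculative strategy rather than a proof, and one that both departs from and claims more than the framework actually established in the paper.
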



Much work has been done on partition congruences modulo odd prime numbers $p$ (several papers include \cite{AhlOno, Mahl, Ono1, Radu1, Rama}), though most successfully for $p> 3$, i.e., for $p$ coprime to 24. For example, \cite{AndrGar, Atkin, AtkinSwinner,BerndtOno, Dyson, GKD, Rama2} proved congruences for the partition function when $p=5, 7, 11$ and powers of these primes, while in \cite{Atkin2, AtkinO, AtkinSwinner2}, analogous results were shown for small powers of certain other primes. Some of the most powerful results currently available are due to Ahlgren and Ono \cite{Ahl2,AhlOno,Ono1}, who proved that for any integer $m$ coprime to $24$, there exist infinitely many, nonnested arithmetic progressions $An+B$ such that $p(An+B)\equiv 0\ \pmod{m}$. Mahlburg \cite{Mahl} later explained combinatorially all such congruences by means of a celebrated partition statistic called crank. 

In this paper we focus on the case $p=2$, and greatly extend on our first work \cite{JKZ} by conjecturally providing a \emph{doubly-indexed, infinite} family of identities modulo $2$ which relate the various multipartition functions. We will show explicitly how to, in principle, prove \emph{any given} case of our conjecture using the analytic machinery of modular forms. However, we do not have a full proof for the conjecture nor for any infinite subfamily. 

A striking consequence, which again greatly generalizes \cite{JKZ}, is that (assuming existence of the $\delta_i$): $\delta_1>0$ if there is any $\delta_t>0$ with $t\not \equiv 0\ (\bmod\ 3)$; and $\delta_1+\delta_3>0$ if there is any $\delta_t>0$. As  noted earlier, an unconditional proof of any of these positivity results appears way beyond reach with the current technology. 

Finally, we will provide a completely algebraic argument for one of our congruences (see \cite{JKZ} for more such proofs). It remains an interesting open problem to find algebraic proofs for most other congruences.

\section{Formulation of the Main Conjectures}

For sake of completeness, we recall the main results from \cite{JKZ} without proof. 

\begin{theorem}[\cite{JKZ}, Theorem 2]\label{JKZ1} Assume all densities $\delta_i$ below exist. Then $\delta_t>0$ implies $\delta_1>0$ for
\begin{equation}
t=5,7,11,13,17,19,23,25. \nonumber
\end{equation}
Moreover, $\delta_t > 0$ implies $\delta_r > 0$ for the following pairs $(t,r)$:
\begin{equation}
(27,9), (9,3), (25,5), (15,3), (21,3), (27,3).\nonumber
\end{equation}
\end{theorem}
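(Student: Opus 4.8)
The plan is to reduce the statement to a finite list of explicit congruences modulo~$2$ between multipartition functions, read off along arithmetic progressions, and then to transfer positivity of density across each such congruence. One works throughout in $\F_2[[q]]$, where $\sum_{n}p_t(n)q^n\equiv\psi(q)^{-t}\pmod 2$ with $\psi(q)=\prod_{i\ge1}(1-q^i)$ and where the Frobenius relation $\psi(q)^{2^j}\equiv\psi(q^{2^j})\pmod2$ is available. The simplest instance of the method is the reduction to odd $t$ used in the Introduction: from $\sum_n p_{2s}(n)q^n\equiv\sum_m p_s(m)q^{2m}\pmod2$ one reads off that $\delta_{2s}$ exists iff $\delta_s$ does, with $\delta_{2s}=\delta_s/2$. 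For odd $t>1$, by contrast, no such \emph{global} identity linking $p_t$ to $p_r$ is available, and one cannot avoid dissecting modulo some integer $A$; this is where the real work lies.

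The first and main step is thus: for each pair $(t,r)$ in the statement, produce a finite family of progressions $A_\ell n+B_\ell$ covering the nonnegative integers outside a set of density~$0$, so that on each progression either $p_t(A_\ell n+B_\ell)\equiv0\pmod2$ for all $n$, or there is a congruence
\begin{equation}
\sum_{n\ge0}p_t(A_\ell n+B_\ell)\,q^n\ \equiv\ \sum_i\ \sum_{n\ge0}p_r\bigl(C_{\ell,i}\,n+D_{\ell,i}\bigr)\,q^n\pmod2,\nonumber
\end{equation}
the sum over $i$ being finite. Each such congruence is an equality of $q$-series that, after clearing denominators by the standard multiplication by a power of $\eta(24z)$, becomes an equality modulo~$2$ of holomorphic modular forms of one weight on one $\Gamma_0(N)$, and hence can be certified by checking finitely many Fourier coefficients against a Sturm bound. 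Radu's recently developed machinery for the parity of the Fourier coefficients of modular forms is what locates the correct moduli $A_\ell,B_\ell$, the correct target progressions for $p_r$, and the small space of mod-$2$ forms in which to compute --- this is the precise sense in which each listed case is, ``in principle,'' provable. That $t$ be coprime to $6$ in the first batch, and that it have one of the special shapes $9,15,21,27$ in the second (where $p_3$ plays the role $p_1$ plays for the coprime-to-$6$ cases), is what keeps these spaces small enough for the computation to terminate; and transitivity of the implication lets one discard redundant pairs, e.g.\ $(27,3)$ from $(27,9)$ and $(9,3)$.

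The second step, the density transfer, is then routine, and is cleanest by contraposition. Suppose $\delta_r=0$. Then $\{m:p_r(m)\text{ is odd}\}$ has density~$0$, so for any fixed $C,D$ the set $\{n:p_r(Cn+D)\text{ is odd}\}$ also has density~$0$, as $n\mapsto Cn+D$ injects it into the former set. By the congruences above, for each covering progression the set $\{n:p_t(A_\ell n+B_\ell)\text{ is odd}\}$ lies in a finite union of sets of that shape, hence has density~$0$; summing over $\ell$ and adding the density-$0$ remainder shows $\{n:p_t(n)\text{ is odd}\}$ has density~$0$, i.e.\ $\delta_t=0$, contradicting $\delta_t>0$. (Equivalently, forward: if $\delta_t>0$ then the odd values of $p_t$ have positive upper density along some $A_\ell n+B_\ell$; the corresponding congruence forces those of $p_r$ to have positive upper density along some $C_{\ell,i}n+D_{\ell,i}$; and the hypothesis that $\delta_r$ exists then gives $\delta_r>0$.)

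The hard part is entirely Step~$1$: finding and certifying the dissected congruences. The identities read straight off $\psi^{-t}=\psi^{-r}\psi^{-(t-r)}$ do not suffice. For instance $\psi^{-5r}\equiv\psi(q)^{-r}\psi(q^4)^{-r}\pmod2$ yields only the convolution $p_{5r}(n)\equiv\sum_{0\le b\le n/4}p_r(n-4b)\,p_r(b)\pmod2$, so that the odd support of $p_{5r}$ merely sits inside the sumset $A+4A$, where $A=\{m:p_r(m)\text{ is odd}\}$; but $|(A+4A)\cap[0,N]|\le|A\cap[0,N]|^2$ is worthless once $|A\cap[0,N]|=o(N)$, so such an identity carries no information about $\delta_{5r}$ in terms of $\delta_r$. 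It is exactly the passage to the sharper, progression-wise congruences --- in which that convolution collapses to a finite sum of individual $p_r$-values --- that makes the transfer possible, and the size of the modular-forms computation required to locate and verify one of them grows with $t$. This is why the list in the statement is finite, and why the analogous assertions for all remaining $t$ stay conjectural in the present paper.
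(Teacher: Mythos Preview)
The paper does not actually prove this theorem here; it is recalled from \cite{JKZ} ``without proof.'' What the surrounding text does make explicit, however, is the mechanism of that proof: Theorem~\ref{JKZ2} records twelve specific two-term congruences
\[
q\sum_{n\ge0}p_t(an+b)q^n\ \equiv\ \frac{1}{\prod_{i\ge1}(1-q^i)^{at}}\;+\;\frac{1}{\prod_{i\ge1}(1-q^{ai})^t}\pmod 2,
\]
and the proof of Corollary to Conjecture~\ref{mainconj} shows the density step for the prototype $(a,t)=(5,1)$: if $\delta_t=0$, then the left side and the second right-hand term each contribute $o(x)$ odd coefficients up to $x$, forcing $\delta_{at}=0$. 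Each implication in the list is an instance of this (with chaining for $(25,1)$ and $(27,3)$).

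Your proposal is correct in spirit --- congruences first, density transfer second, and your Step~2 argument is fine --- but Step~1 is only a programme, not a proof. You neither exhibit nor verify a single congruence; you merely assert that Radu's machinery and a Sturm-bound check would produce and certify them. That is a description of how one might proceed, not a proof of the theorem.

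There is also a structural mismatch with the paper's route that makes your scheme more laborious than necessary. You propose, for each pair $(t,r)$, a \emph{family} of progressions covering almost all integers, on each of which $p_t$ is written as a finite $\F_2$-combination of $p_r$-values along subprogressions. In the actual argument the roles are reversed and far simpler: it is $p_r$ (the smaller index) that is dissected along a \emph{single} progression of modulus $a=t/r$, and the right side has exactly \emph{two} terms --- $\sum p_{t}(n)q^n$ and $\sum p_r(n)q^{an}$. Because the second of these is again governed by $\delta_r$, the contradiction is immediate; no inductive covering of residues and no multi-term bookkeeping are needed. Your framework would work if executed, but for this finite list it is overkill: the twelve congruences of Theorem~\ref{JKZ2} are already established in \cite{JKZ}, and the density deduction is the short paragraph reproduced verbatim for $a=5$ in the proof of Corollary to Conjecture~\ref{mainconj}.
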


Remember that two power series, $\sum_n a(n)q^n$ and $\sum_n b(n)q^n$, are \emph{congruent modulo} $m$ if $a(n)\equiv b(n)$ (mod $m$) for all $n$. Unless specified otherwise, all congruences from here on will be modulo $2$. An essential tool to prove Theorem \ref{JKZ1} was given by the following theorem, which we state in a slightly different form.

\begin{theorem} [\cite{JKZ}, Theorem 3] \label{JKZ2}
The congruence 
\begin{equation} \label{eq:conj}
q\sum_{n=0}^{\infty}p_t(a n +b)q^n\equiv \frac{1}{\prod_{i=1}^{\infty}(1-q^i)^{at}}+\frac{1}{\prod_{i=1}^{\infty}(1-q^{ai})^t}
\end{equation}
holds for the following twelve pairs $(a,t)$:
\begin{equation}
(5,1), (7,1), (11,1), (13,1), (17,1), (19,1), (23,1), (3,3), (5,3), (7,3), (5,5), (3,9),\nonumber
\end{equation}
where \[
    b= 
\begin{cases}
    \frac{t}{3}\cdot8^{-1}\ (\bmod\ a),& \text{if } 3\vert t;\\
    t\cdot24^{-1}\ (\bmod\ a),              & \text{otherwise}.
\end{cases}
\]
\end{theorem}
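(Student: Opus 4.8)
The plan is to rewrite \eqref{eq:conj} as an equality of modular functions modulo $2$ on a suitable $\Gamma_0(M)$ and then to reduce each of the twelve cases to a finite coefficient check. Write $q=e^{2\pi i\tau}$, $\eta(\tau)=q^{1/24}\prod_{i\ge1}(1-q^i)$, so that $\prod_{i\ge1}(1-q^i)^{-t}=q^{t/24}\eta(\tau)^{-t}$. First I would put the three $q$-products of \eqref{eq:conj} into weight-$0$ eta-quotient form mod $2$: from $1-q^i\equiv 1+q^i$ one gets $\prod_{i\ge1}(1-q^i)^{-1}\equiv\prod_{i\ge1}(1-q^i)/(1-q^{2i})\pmod{2}$, hence $\prod_{i\ge1}(1-q^i)^{-t}\equiv\prod_{i\ge1}(1-q^i)^{t}/(1-q^{2i})^{t}\pmod{2}$ for every $t$. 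Thus modulo $2$ the right-hand side of \eqref{eq:conj} equals the sum of the two weight-$0$ eta-quotients $\prod_{i\ge1}(1-q^i)^{at}/(1-q^{2i})^{at}$ and $\prod_{i\ge1}(1-q^{ai})^{t}/(1-q^{2ai})^{t}$, each holomorphic and non-vanishing on the upper half-plane $\mathbb{H}$, hence a modular function on some $\Gamma_0(N_1)$ with poles and zeros only at the cusps.

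Second, I would interpret the left-hand side via a sieving operator. Since $\sum_m p_t(m)q^m=\prod_{i\ge1}(1-q^i)^{-t}$, passing to the variable $q^{24}$ — or to $q^{8}$ when $3\mid t$, after using Jacobi's identity $\eta(\tau)^{3}\equiv\sum_{k\ge0}q^{(2k+1)^2/8}\pmod{2}$ to absorb an $\eta^{3}$ — turns this eta-quotient into one with integral $q$-expansion. The value $b\equiv t\cdot24^{-1}\pmod{a}$, respectively $b\equiv\tfrac{t}{3}\cdot8^{-1}\pmod{a}$, is exactly the residue for which $a$ is coprime to the relevant denominator ($24$ or $8$), so that the classical $U_a$-operator applied to that eta-quotient extracts the genuine subprogression $\sum_n p_t(an+b)q^n$; this is precisely why the $a=3$ rows require $3\mid t$, forcing the denominator $8$. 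By the standard formalism for the action of $U_a$ on eta-quotients — the circle of technical tools associated with Radu, controlling the level produced by $U_a$, the multiplier, and the orders at the cusps — the series $q\sum_n p_t(an+b)q^n$ is, up to a fixed power of $q$, a weight-$0$ modular function mod $2$ on some $\Gamma_0(N_2)$, again with poles confined to the cusps.

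Passing to $M:=\mathrm{lcm}(N_1,N_2)$, both sides of \eqref{eq:conj} become, up to one common power of $q$, weight-$0$ modular functions on $\Gamma_0(M)$ holomorphic on $\mathbb{H}$. The eta-quotient data make the order of the pole of each side at every cusp of $\Gamma_0(M)$ explicit, so the valence formula (the weight-$0$ Sturm bound) provides an explicit $B=B(a,t)$ such that the two sides coincide as soon as their $q$-expansions agree up to $q^{B}$. For each of the twelve pairs $(a,t)$ this is a concrete finite verification; in the two cases $a=5,7$ one may instead quote Ramanujan's classical identities for $\sum_n p(5n+4)q^n$ and $\sum_n p(7n+5)q^n$, and for several of the remaining pairs a purely elementary $q$-series manipulation (pentagonal and Jacobi identities together with $(1-q^i)^2\equiv 1-q^{2i}$) already suffices; the rest are checked by machine.

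The crux, and the step I expect to be the main obstacle, is the second one: rigorously certifying that $U_a$ carries the chosen eta-quotient to a modular function on $\Gamma_0(N_2)$ with the asserted orders at the cusps requires careful bookkeeping of the eta-multiplier (a root of unity of order dividing $24$), of the level created by $U_a$, and of the cuspidal behaviour of $\Gamma_0(N_2)$ — exactly the kind of technical control isolated in Radu's work. Just as importantly, none of this is uniform in $a$: the level $N_2$, and hence $M$ and the bound $B(a,t)$, grow with $a$, so the argument settles any single prescribed instance of \eqref{eq:conj} but never an infinite subfamily — which is precisely the point at which the general (doubly-indexed) conjecture of the present paper remains open.
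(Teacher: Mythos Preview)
Your approach is essentially the same as the paper's: the present paper recalls Theorem~\ref{JKZ2} from \cite{JKZ} without proof, but the method it demonstrates in detail for the case $a=31$ (Theorem~\ref{identities1}) is precisely what you outline---Radu's criterion (Theorem~\ref{RaduThm}) to certify the left side as a weight-zero modular function on $\Gamma_0(M)$, rewriting the right side as weight-zero $\eta$-quotients via $\eta(2\tau)\equiv\eta(\tau)^2$, clearing cusps (the paper multiplies by a power of $\eta(4\tau)^{24}$ rather than invoking the valence formula directly, but this is the same idea), and finishing with Sturm's bound and a machine check; your remark that nothing is uniform in $a$ also matches the paper's explicit caveat that no infinite subfamily can be handled this way.
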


In this work, we frame the above congruences in a much broader context. Specifically, we conjecture that for any odd integers $a$ and $t$ such that $3\vert t$ if $3\vert a$, an identity similar to (though usually more complicated than)  (\ref{eq:conj}) exists. From this, we derive an \emph{infinite class} of ``delta implications'' (under the assumption all relevant $\delta_i$ exist) analogous to those given in Theorem \ref{JKZ1}. The following lays this out explicitly for $t=1$. As usual, we denote by $\lfloor \alpha \rfloor$ (respectively, $\lceil \alpha \rceil$) the nearest integer to $\alpha$ that is less (respectively, greater) than or equal to $\alpha$.

\begin{conjecture}\label{mainconj} 
Fix a positive integer $a$ coprime to $24$. Let $b=24^{-1}\ (\bmod\ a)$ (or in the trivial case $a=1$, let $b=0$) and $k=\ceil[\big]{\frac{a^2-1}{24a}}$. Then
\begin{equation} \label{CongOfForm}
q^k \sum_{n=0}^{\infty} p(an+b)q^n \equiv \sum_{d|a} \sum_{j=0}^{\lfloor k/d \rfloor} \frac{\epsilon^1_{a,d,j}\ q^{dj}}{\prod_{i\geq 1} (1-q^{di})^{\frac{a}{d}-24j}},
\end{equation}
for a suitable choice of the $\epsilon^1_{a,d,j}\in\{0,1\}$, where $\epsilon^1_{a,1,0}=1$ and $\epsilon^1_{a,d,j}=0$ if $a/d-24j<0$. 
\end{conjecture}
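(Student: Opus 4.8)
The plan is to give a uniform recipe that settles \emph{each individual instance} of Conjecture~\ref{mainconj} (i.e.\ each fixed $a$) effectively, in the spirit of the proof of Theorem~\ref{JKZ2}; the part that resists a uniform treatment is exactly what will keep the conjecture open in general.

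\emph{First, recast both sides as images of $\eta(24z)^{-1}$.} Write $q=e^{2\pi i z}$ and recall that $\sum_{N\ge 0}p(N)q^{24N-1}=\eta(24z)^{-1}$ is a weakly holomorphic modular form of weight $-1/2$ on $\Gamma_0(576)$ (with the appropriate multiplier). Since $24b\equiv 1\ (\bmod\ a)$ we have $24(an+b)-1=a(24n+r)$ with $r=(24b-1)/a\in\Z$, so the exponents occurring in $\sum_n p(an+b)q^{24(an+b)-1}$ are exactly the $a$-divisible exponents of $\eta(24z)^{-1}$; moreover, because $a$ is coprime to $24$, the choice $k=\ceil[\big]{\frac{a^2-1}{24a}}$ forces the identity $24ak-(a^2-1)=24b$, and unwinding it gives
\[
q^k\sum_{n\ge 0}p(an+b)\,q^n\ \Big|_{q\mapsto q^{24a}}\ =\ q^{a^2}\,\bigl(\eta(24z)^{-1}\,|\,U_a\,|\,V_a\bigr),
\]
where $U_a,V_a$ are the usual operators, so that $\eta(24z)^{-1}|U_a|V_a$ is the part of $\eta(24z)^{-1}$ supported on $a$-divisible exponents. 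On the other side, the same substitution together with $\prod_{i\ge 1}(1-q^{mi})=q^{-m/24}\eta(mz)$ turns \emph{every} summand of \eqref{CongOfForm} into $q^{a^2}\eta(24adz)^{-(a/d-24j)}$ (the apparent coincidence of the $q$-powers is again forced by the choices of $k$ and $b$). Hence \eqref{CongOfForm}, for the given $a$, is equivalent to the congruence
\[
\eta(24z)^{-1}\,|\,U_a\,|\,V_a\ \equiv\ \sum_{d\mid a}\sum_{j=0}^{\lfloor k/d\rfloor}\epsilon^1_{a,d,j}\,\eta(24adz)^{-(a/d-24j)},
\]
a congruence modulo $2$ between weakly holomorphic modular forms --- of mixed, half-integral weight --- on a common group $\Gamma_0(N_a)$ with $N_a$ explicit.

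\emph{Next, reduce each instance to a finite verification.} Since both sides are weakly holomorphic (poles only at the cusps) and not weight-homogeneous, the classical Sturm bound does not apply directly; this is where Radu's recent technical tools for the parity of Fourier coefficients of modular forms enter. One bounds, explicitly in terms of $a$, the pole order at every cusp of $\Gamma_0(N_a)$ on each side, multiplies through by a single sufficiently high power of a holomorphic $\eta$-product on $\Gamma_0(N_a)$ to clear all poles, and --- exploiting relations such as $\eta(z)^2\equiv\eta(2z)\ (\bmod\ 2)$ to move between weights --- lands both sides in a finite-dimensional space of holomorphic modular forms modulo $2$ on $\Gamma_0(N_a)$, to which an effective (possibly half-integral weight) Sturm-type bound $B_a$ applies. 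For the given $a$ the congruence then reduces to checking that the first $B_a$ values of $p(an+b)$ agree modulo $2$ with the coefficients of the indicated combination of the finitely many eta-quotients on the right --- equivalently, to solving a finite linear system over $\F_2$ for the $\epsilon^1_{a,d,j}$. That $\epsilon^1_{a,d,j}\in\{0,1\}$ is then automatic (the unknowns live in $\F_2$), that $\epsilon^1_{a,1,0}=1$ falls out of matching the dominant piece $\eta(24az)^{-a}$, and that $\epsilon^1_{a,d,j}=0$ whenever $a/d-24j<0$ holds because such an $\eta$-quotient is not even a power series.

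\emph{The main obstacle} to a full proof is the passage from ``each $a$'' to ``all $a$''. On the one hand, the level $N_a$, the cusp-pole orders, and hence the Sturm bound $B_a$, all grow with $a$, whereas the right-hand side of \eqref{CongOfForm} is a fixed closed-form expression; matching the two would require simultaneous control of the behavior of $\eta(24z)^{-1}\,|\,U_a\,|\,V_a$ at \emph{every} cusp of $\Gamma_0(N_a)$, uniformly in $a$. On the other hand --- and this is the harder point --- one would have to explain a priori why the prescribed, rigid list $\{\eta(24adz)^{-(a/d-24j)}:\ d\mid a,\ 0\le j\le\lfloor k/d\rfloor,\ a/d-24j\ge 0\}$ is precisely the right set of building blocks: that no other eta-quotients are needed and that the truncations $\lfloor k/d\rfloor$ are sharp. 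We do not see how to carry out either step uniformly, which is exactly why the statement is left as a conjecture; nevertheless, the reduction above makes every single case decidable, and we work out several of them explicitly.
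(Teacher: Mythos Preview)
Your proposal is essentially correct as a meta-strategy, and it aligns in spirit with what the paper does: neither you nor the paper proves Conjecture~\ref{mainconj} in general, but both reduce each fixed $a$ to a finite computation via modular forms, pole-clearing, and a Sturm bound, and both identify the same obstruction to a uniform proof (growth of the level and Sturm bound with $a$, and no a~priori reason why the prescribed list of $\eta$-quotients suffices).

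The technical route you take is genuinely different from the paper's, however. You work with $\eta(24z)^{-1}$ and the operators $U_a,V_a$, landing in mixed half-integral weights on $\Gamma_0(N_a)$, and only invoke ``Radu's tools'' loosely to bound cusp orders. The paper instead uses Radu's framework directly (Theorem~\ref{RaduThm}): it chooses $N=1$, $r=(-t)$, an explicit $M$, and an $s$-vector so that $F(s,r,a,b)(\tau)=\prod_{\delta\mid M}\eta(\delta\tau)^{s_\delta}\cdot g_{a,b}(q)$ is a weight-zero weakly holomorphic modular function on $\Gamma_0(M)$; multiplies the right side by the same $\eta$-quotient; uses $\eta(2\tau)\equiv\eta(\tau)^2$ to force weight zero there as well; applies Radu's explicit cusp-order bound (Theorem~\ref{RaduThm47}) and Ligozat's formula (Theorem~\ref{ligozat}) to determine a power of $\eta(4\tau)^{24}$ that clears all poles; and then applies Sturm. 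Your $q^{a^2}$-normalization and the identity $24ak-(a^2-1)=24b$ are correct and give a clean conceptual picture of why the right side of \eqref{CongOfForm} consists exactly of the terms $q^{a^2}\eta(24adz)^{-(a/d-24j)}$; the paper's route, by contrast, is more directly computable in practice, since Radu's theorems supply the needed bounds off the shelf without having to analyze $U_a$ at all cusps by hand. Both approaches are valid and lead to the same case-by-case decidability; your write-up would be strengthened by citing the specific Radu results (Theorems~\ref{RaduThm} and~\ref{RaduThm47}) rather than gesturing at them.
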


\begin{repcorollary}{mainconj} 
Fix any positive integer $a\not \equiv 0\ (\bmod\ 3)$, and suppose all necessary $\delta_i$ exist. If $\delta_a>0$, then $\delta_1>0$.  
\end{repcorollary} 

Something of note is what we mean by ``necessary $\delta_i$'' in the above corollary. Focusing without loss of generality on $a$ odd, it is certainly sufficient to assume that for all $1\leq i\leq a$, $i\equiv \pm\ 1\ (\bmod\ 6)$, $\delta_i$ exists. However, in general, only a small fraction of such $\delta_i$ are actually needed in order to obtain a given delta implication, though determining explicitly which seems extremely hard.

The above conjecture is the main special case ($t=1$) of the following. It provides an infinite, two-dimensional set of conjectural partition identities modulo $2$.

\begin{conjecture}\label{biggerconj} 
Fix any positive odd integers $a$ and $t$, where $3\vert t$ if $3\vert a$. Let $k=\ceil[\big]{\frac{t(a^2-1)}{24a}}$. Then
\begin{equation}\label{CongOfForm2}
q^k \sum_{n=0}^{\infty} p_t(an+b)q^n \equiv \sum_{d|a} \sum_{j=0}^{\lfloor k/d \rfloor} \frac{\epsilon^t_{a,d,j}\ q^{dj}}{\prod_{i\geq 1} (1-q^{di})^{\frac{at}{d}-24j}},
\end{equation}
where \[
    b= 
\begin{cases}
   \ \ \ 0, &\text{if }\ a=1;\\
    \frac{t}{3}\cdot8^{-1}\ (\bmod\ a),& \text{if }\ 3\vert t;\\
    t\cdot24^{-1}\ (\bmod\ a),              & \text{otherwise},
\end{cases}
\]
for a suitable choice of the $\epsilon^t_{a,d,j}\in\{0,1\}$, where $\epsilon^t_{a,1,0}=1$ and $\epsilon^t_{a,d,j}=0$ if $at/d-24j<0$.
\end{conjecture}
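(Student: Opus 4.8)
This statement is conjectural, and the realistic goal is to establish an \emph{individual} instance: fix an admissible pair $(a,t)$ (both odd, with $3\mid t$ when $3\mid a$) together with the associated $b$ and $k=\ceil[\big]{t(a^2-1)/(24a)}$. The plan is to exhibit both sides of \eqref{CongOfForm2} as reductions modulo $2$ of holomorphic modular forms on a common group $\Gamma_0(N)$ of bounded weight, and then to collapse the congruence to a finite Fourier-coefficient verification by a Sturm bound. Every step is effective, so the scheme proves \eqref{CongOfForm2} for any one $(a,t)$; the reason it does not settle the conjecture is taken up at the end.

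For the right-hand side, multiply \eqref{CongOfForm2} through by $\prod_{i\ge1}(1-q^i)^{at}$. Using $\sum_{n\ge0}p_t(n)q^n=q^{t/24}\eta(z)^{-t}$, the summand indexed by $(d,j)$ becomes exactly the eta-quotient $\eta(z)^{at}/\eta(dz)^{at/d-24j}$, which has integral weight $w_{d,j}=\tfrac12\bigl(at-at/d+24j\bigr)$ — integral because every $d\mid a$ is odd, so $at-at/d$ is even. Choosing $N$ to be a suitable multiple of $24a$, the constraints $d\mid a$, $0\le j\le\lfloor k/d\rfloor$ and $at/d-24j\ge0$, which are precisely the range in which $\epsilon^t_{a,d,j}$ is allowed to be nonzero, are exactly what forces the Ligozat/Newman order-at-cusps and integrality conditions to hold; hence each such eta-quotient is a holomorphic modular form on $\Gamma_0(N)$ (with a quadratic character), and the right-hand side lies in the reduction mod $2$ of the space of holomorphic forms on $\Gamma_0(N)$ of weight at most an explicit bound $W$.

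For the left-hand side, one must show $q^k\sum_{n\ge0}p_t(an+b)q^n$ is congruent mod $2$ to a holomorphic form on the same $\Gamma_0(N)$. This is the precise situation for which Radu's technical machinery is designed: the arithmetic-progression extraction $\sum_n p_t(an+b)q^n$, after multiplication by a suitable eta-quotient supported on divisors of $N$, is the $q$-expansion of a modular function, and Radu's criteria certify when it is holomorphic and bound its weight. The prescribed value of $b$ — that is, $24b\equiv t\pmod a$, resp. $8b\equiv t/3\pmod a$ when $3\mid t$ — is exactly the residue for which $a\mid(24n-t)$ on the progression $n\equiv b\pmod a$, which is the standard condition that lets the level of the twisted $U_a$-image drop from $O(a^2)$ to $O(a)$ and makes that image holomorphic, while $k$ is the least power of $q$ absorbing the resulting pole at the cusp $\infty$. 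After enlarging $N$ and $W$ if necessary, the left-hand side then lies in the same finite-dimensional space as the right-hand side.

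Finally, with both sides in one finite-dimensional $\mathbb{F}_2$-vector space of modular forms, a Sturm bound produces a constant $B=\tfrac{W}{12}[\SL_2(\Z):\Gamma_0(N)]$ such that \eqref{CongOfForm2} holds verbatim once the coefficients of $q^0,\dots,q^B$ agree; one then reads the admissible pattern $\epsilon^t_{a,d,j}$ off these finitely many coefficients (equivalently, searches the finitely many $\{0,1\}$-patterns and checks), which finishes the proof for that $(a,t)$. In favorable cases a self-contained algebraic verification, in the spirit of \cite{JKZ}, is instead available by manipulating the pentagonal- and triangular-number series modulo $2$, but this is not systematic. \emph{The main obstacle} to proving \eqref{CongOfForm2} in general — or even along one infinite subfamily — is that the argument is irreducibly case-by-case: there is no known closed form or recursion for the signs $\epsilon^t_{a,d,j}$, the parameters $N\asymp a$ and $W\asymp ta$ push the Sturm bound $B$ up without any uniform control, and nothing in the method explains \emph{why} an admissible pattern must exist for every $(a,t)$. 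A proof in full generality appears to need a genuinely new structural idea.
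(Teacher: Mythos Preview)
Your strategy is exactly the paper's: for a fixed admissible $(a,t)$, invoke Radu's criteria to recognize the left side as (essentially) a modular function on a congruence subgroup, realize the right side as a finite $\{0,1\}$-combination of eta-quotients, and then reduce the mod~$2$ identity to a finite Sturm-bound check; the paper carries this out in detail for $(a,t)=(31,1)$ and, like you, explains why nothing uniform over $(a,t)$ is available.

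Two technical steps in your outline need tightening before it actually runs. First, after multiplying through by $\prod_i(1-q^i)^{at}$, your eta-quotients $\eta(\tau)^{at}/\eta(d\tau)^{at/d-24j}$ have \emph{different} weights $w_{d,j}$, so they do not sit in a single finite-dimensional space to which Sturm applies; the paper fixes this by exploiting $\eta(2\tau)\equiv\eta(\tau)^2\pmod 2$ to push every term to weight~$0$ on $\Gamma_0(2a)$ (this is also why the working level is $2a$, not a multiple of $24a$). Second, Radu's conditions yield only a \emph{weakly holomorphic} weight-$0$ function: your factor $q^k$ absorbs the pole at $\infty$, but the other cusps remain, and ``Radu's criteria certify when it is holomorphic'' is not accurate. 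The paper clears all poles simultaneously on both sides by multiplying by a large power of $\eta(4\tau)^{24}$, with the exponent supplied by the uniform cusp bound of Theorem~\ref{RaduThm47} together with Ligozat's formula; only then is Sturm's theorem invoked. With these two adjustments your sketch coincides with the paper's proof.
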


\begin{repcorollary}{biggerconj} 
Fix any positive integer $A$ and suppose all necessary $\delta_i$ exist. If $\delta_{A}>0$, then $\delta_1+\delta_3>0$. 
\end{repcorollary} 

\begin{remark}
Perhaps unsurprisingly, no partition congruence similar to the above and implying $\delta_3>0 \implies \delta_1>0$ appears to exist. In fact, we have not been able to determine \emph{any} congruence, even on a conjectural level, that yields such a basic delta implication.
\end{remark}

\section{Modular Form Preliminaries}

We now present the necessary facts and notation coming from modular forms, along with the main theorem of Radu \cite{Radu}; for basic facts, proofs, and further details, we refer the reader to \cite{Koblitz,Ono}. The notation used below mainly follows \cite{Radu}.

Fix $N\geq 1$ and let $\Z^{d(N)}$ denote the set of integer tuples with entries $r_\delta$ indexed by the positive divisors $\delta$ of $N$.  For a given $r=(r_{\delta}) \in \Z^{d(N)}$, define
\begin{equation}
\omega(r) = \sum_{\delta \vert N} r_\delta \, ; \quad \sigma_\infty (r) = \sum_{\delta \vert N} \delta r_\delta \, ; \quad \sigma_0(r) = \sum_{\delta \vert N} \frac{N}{\delta} r_\delta \, ; \quad \Pi(r) = \prod_{\delta \vert N} \delta^{\vert r_\delta \vert}.\nonumber
\end{equation}
Additionally, let $\sum_{n\geq 0} \alpha_r(n) q^n = \prod_{\delta \vert N} \prod_{i\geq1} (1-q^{\delta i})^{r_\delta}$, and define
\begin{equation}
g_{a,b} (q) = q^{\frac{24b+\sigma_\infty (r)}{24a}} \sum_{n=0}^\infty \alpha_r(an+b)q^n.\nonumber
\end{equation}

Now let $\Delta^*$ be the set of tuples $(a,N,M,b,(r_\delta)) \in (\mathbb{N^{+}})^3  \times \{0,1,\dots,a-1\} \times \Z^{d(N)}$ such that the following hold: For all primes $p$, if $p\vert a$ then $p\vert M$; if $r_\delta \neq 0$, then $\delta \vert aM$; and if $\kappa = \gcd(a^2-1,24)$, then
\begin{equation}
24 {\ }\vert {\ } \kappa\frac{aM^2}{N} \sigma_0(r); \quad 8 {\ }\vert {\ } \kappa M \omega(r); \quad \frac{24a}{\gcd(\kappa (24b+\sigma_\infty(r)),24a)} {\ }\vert {\ } M.\nonumber
\end{equation}

We define $\Gamma_0(N)$ as the subgroup of $SL_2(\mathbb{Z})$ consisting of the matrices $\left( \begin{matrix} v & w \\ x & y \end{matrix} \right)$ where $ x \equiv 0 \pmod{N}$. For $a,N \in \mathbb{N}^{+}$, $b\in \{0,1,\dots,a-1\}$, and $r\in \Z^{d(N)}$, let $P_{a,r}(b)$ be the set of residues modulo $a$ that can be written as $b\omega^2+(\omega^2-1)\sigma_{\infty}(r)/24$ for some integer $\omega$ with $\gcd(\omega,24)=1$. Further, let
\begin{equation}
\chi_{a,r}(b) = \prod_{\ell \in P_{a,r}(b)} e^{2\pi i\frac{(1-a^2)(24\ell+\sigma_{\infty}(r))}{24a}}.\nonumber
\end{equation}

Set $f$ to be a modular form of weight $k$ and character $\chi$ for $\Gamma_0(N)$. We let the \emph{level} of $f$ be the least value of $N$ for which $f$ is a modular form of weight $k$ for $\Gamma_0(N)$. An \emph{$\eta$-quotient} is a quotient of powers of the Dedekind $\eta$-function, $\eta(\tau)$, and magnifications thereof, $\eta(C\tau)$, where
\begin{equation}
\eta(\tau) = q^{\frac{1}{24}} \prod_{i=1}^\infty (1-q^i),\ \ \ q = e^{2\pi i \tau}.\nonumber
\end{equation}

The following theorem of Gordon, Hughes, and Newman gives conditions sufficient for any given $\eta$-quotient to be a modular form.
\begin{theorem} [\cite{GH,Newman}]\label{ghn} Let $N\geq 1$ be an integer and $f(\tau) = \prod_{\delta \vert N} \eta^{r_\delta} (\delta \tau)$, with $r=(r_{\delta})\in \Z^{d(N)}$.  If
\begin{equation}
\sigma_{\infty}(r)\equiv \sigma_0(r)\equiv 0\ (\bmod\ 24)\nonumber,
\end{equation}
then $f$ is a modular form of weight $k=\frac{1}{2} \sum r_\delta$, level dividing $N$, and character $\chi(d) = \left( \frac{(-1)^k\cdot \beta}{d} \right)$, where $\beta = \prod_{\delta \vert N} \delta^{r_\delta}$ and $\left(\frac{\cdot}{d}\right)$ denotes the Jacobi symbol.
\end{theorem}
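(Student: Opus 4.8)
The plan is to verify directly that $f$ satisfies the weight-$k$, character-$\chi$ modular transformation law under $\Gamma_0(N)$, i.e. that $f(\gamma\tau)=\chi(d)(c\tau+d)^k f(\tau)$ for every $\gamma=\left(\begin{smallmatrix} a & b \\ c & d\end{smallmatrix}\right)\in\Gamma_0(N)$; the two divisibility hypotheses $\sigma_\infty(r)\equiv\sigma_0(r)\equiv 0\ (\bmod\ 24)$ are precisely what is needed to collapse the multiplier system of the $\eta$-function into a quadratic Dirichlet character. Holomorphy on the upper half-plane $\mathbb{H}$ is automatic and costs nothing, since $\eta(\delta\tau)$ is holomorphic and nowhere vanishing on $\mathbb{H}$ for every $\delta$, so an $\eta$-quotient is holomorphic (and nonvanishing) there regardless of the signs of the $r_\delta$. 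Here ``modular form'' is understood in the sense of satisfying this transformation law, a form which a priori may be meromorphic at the cusps; the stronger conditions guaranteeing holomorphy at the cusps are not assumed.

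The engine is the classical transformation formula for $\eta$: for $\left(\begin{smallmatrix} p & q \\ r & s\end{smallmatrix}\right)\in\SL_2(\mathbb{Z})$ with $r>0$,
\[
\eta\!\left(\frac{p\tau+q}{r\tau+s}\right)=\varepsilon(p,q,r,s)\,(r\tau+s)^{1/2}\,\eta(\tau),\qquad \varepsilon(p,q,r,s)=\exp\!\left(\pi i\left(\frac{p+s}{12r}-s(s,r)-\tfrac14\right)\right),
\]
where $s(\cdot,\cdot)$ is the Dedekind sum. For each divisor $\delta\mid N$ I would use the key identity $\delta(\gamma\tau)=\gamma_\delta(\delta\tau)$ with $\gamma_\delta=\left(\begin{smallmatrix} a & b\delta \\ c/\delta & d\end{smallmatrix}\right)$; this $\gamma_\delta$ lies in $\SL_2(\mathbb{Z})$ because $\delta\mid N\mid c$ and $\det\gamma_\delta=ad-bc=1$. (One may assume $c>0$: the cases $c=0$ and $c<0$ reduce to $\eta(\tau+1)=e^{\pi i/12}\eta(\tau)$ together with $\sigma_\infty(r)\equiv 0\ (\bmod\ 24)$, and to replacing $\gamma$ by $-\gamma$.) Applying the $\eta$-transformation to each $\gamma_\delta$ gives $\eta(\delta\gamma\tau)=\varepsilon(\gamma_\delta)(c\tau+d)^{1/2}\eta(\delta\tau)$, hence
\[
f(\gamma\tau)=\prod_{\delta\mid N}\eta^{r_\delta}(\delta\gamma\tau)=\left(\prod_{\delta\mid N}\varepsilon(\gamma_\delta)^{r_\delta}\right)(c\tau+d)^{\frac12\sum_\delta r_\delta}f(\tau).
\]
This already identifies the weight as $k=\tfrac12\sum_\delta r_\delta$ and reduces the theorem to showing that the scalar $\mu(\gamma):=\prod_{\delta\mid N}\varepsilon(\gamma_\delta)^{r_\delta}$ equals $\left(\frac{(-1)^k\beta}{d}\right)$ with $\beta=\prod_\delta\delta^{r_\delta}$.

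The crux is therefore to evaluate $\mu(\gamma)$ explicitly. Substituting $\left(\begin{smallmatrix} a & b\delta \\ c/\delta & d\end{smallmatrix}\right)$ into the formula for $\varepsilon$ produces a product over $\delta$ of exponential factors built from $\frac{a+d}{c/\delta}$, the Dedekind sums $s(d,c/\delta)$, and the constants $-\tfrac14$. One handles the Dedekind-sum contributions through the reciprocity law $s(d,m)+s(m,d)=-\tfrac14+\tfrac{1}{12}\!\left(\tfrac{d}{m}+\tfrac{m}{d}+\tfrac{1}{dm}\right)$, after which the surviving ``linear-in-$\delta$'' pieces reassemble into $\sigma_\infty(r)$ and $\sigma_0(r)$; the hypotheses $\sigma_\infty(r)\equiv\sigma_0(r)\equiv 0\ (\bmod\ 24)$ make each such exponential factor trivial. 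What remains is a product of Jacobi symbols in $d$, which by quadratic reciprocity and multiplicativity rearranges into $\left(\frac{(-1)^k\beta}{d}\right)$; in particular $\mu(\gamma)$ depends only on $d\bmod N$, so it is a genuine character. The main obstacle is exactly this last bookkeeping — tracking all the $24$th roots of unity and sign conventions in the $\eta$-multiplier and confirming that the two congruence conditions annihilate precisely the transcendental terms, leaving the quadratic residue symbol. (An essentially equivalent alternative is to first treat $N=1$, $r=(24)$, i.e. $\eta^{24}=\Delta$, together with a scaling lemma, and then bootstrap; but the Dedekind-sum computation above is the self-contained route carried out by Gordon, Hughes, and Newman.)
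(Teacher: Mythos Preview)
The paper does not supply its own proof of this statement; Theorem~\ref{ghn} is quoted verbatim as a known result of Gordon--Hughes \cite{GH} and Newman \cite{Newman} and then used as a black box in the later arguments. So there is no in-paper proof to compare your proposal against.

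That said, your outline is essentially the classical argument found in those references (and reproduced, for instance, in \cite{Ono}): verify the $\Gamma_0(N)$-transformation law factor by factor via the identity $\delta\cdot(\gamma\tau)=\gamma_\delta\cdot(\delta\tau)$ with $\gamma_\delta=\left(\begin{smallmatrix} a & b\delta \\ c/\delta & d\end{smallmatrix}\right)\in\SL_2(\Z)$, apply the explicit $\eta$-multiplier, and then use Dedekind-sum reciprocity so that the two congruence hypotheses $\sigma_\infty(r)\equiv\sigma_0(r)\equiv 0\pmod{24}$ kill the transcendental $24$th-root-of-unity contributions, leaving only the quadratic character $\chi(d)=\left(\tfrac{(-1)^k\beta}{d}\right)$. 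Your remark that holomorphy at the cusps is not asserted here is also consistent with how the paper uses the result (it pairs Theorem~\ref{ghn} with Theorem~\ref{ligozat} precisely to control cusp orders separately). In short, your plan is correct and matches the standard literature proof; it simply has nothing to be compared to within this paper.
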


Radu's main result can be phrased as follows:
\begin{theorem} [\cite{Radu}]\label{RaduThm}
Let $(a,N,M,b,r=(r_\delta)) \in \Delta^*$, $s=(s_\delta) \in \Z^{d(M)}$, and let $\nu$ be an integer such that $\chi_{a,r}(b) = e^{\frac{2\pi i\nu}{24}}$ (such an integer is guaranteed to exist by \cite{Radu}, Lemma 43). Define
\begin{equation}
F(s,r,a,b) (\tau) = \prod_{\delta \vert M} \eta^{s_\delta} (\delta \tau) \prod_{u \in P_{a,r}(b)} g_{a,u}(q).\nonumber
\end{equation}
Then $F$ is a weakly holomorphic modular form of weight zero and trivial character for $\Gamma_0(M)$ if and only if the following conditions simultaneously hold:
\begin{align}
\vert P_{a,r} (b) \vert \cdot \omega(r) + \omega(s) = 0; \\
 \nu + \vert P_{a,r} (b) \vert \cdot a \sigma_\infty (r) + \sigma_\infty (s) \equiv 0 \pmod{24}; \\
\vert P_{a,r} (b) \vert \cdot \frac{aM}{N}\sigma_0(r) + \sigma_0(s) \equiv 0 \pmod{24}; \\
\Pi(s)\cdot \prod_{\delta \vert N} (a \delta)^{\vert r_\delta \vert\cdot {\vert P_{a,r} (b) \vert } } \, \text{ is a perfect square.}
\end{align}
\end{theorem}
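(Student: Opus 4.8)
\emph{Proof proposal.} The plan is to verify the two defining properties of a weakly holomorphic modular form of weight zero and trivial character for $\Gamma_0(M)$---invariance of $F$ under $\tau\mapsto\gamma\tau$ for every $\gamma\in\Gamma_0(M)$, and holomorphy on the upper half-plane $\mathbb H$ (meromorphy at the cusps being automatic)---by computing the transformation behavior of each factor of $F$ separately and then combining them.

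First I would record the transformation laws of the two kinds of factors. For the genuine $\eta$-quotient part $h_s(\tau):=\prod_{\delta\vert M}\eta^{s_\delta}(\delta\tau)$, the classical transformation formula for $\eta$-quotients (which refines Theorem \ref{ghn}) gives, for every $\gamma=\left(\begin{smallmatrix}v&w\\x&y\end{smallmatrix}\right)\in\Gamma_0(M)$, an identity $h_s\vert_{k_s}\gamma=\varepsilon_s(\gamma)\,h_s$, where $k_s=\tfrac12\omega(s)$ and $\varepsilon_s(\gamma)$ is an explicit $24$-th root of unity governed by $\sigma_\infty(s)$ and $\sigma_0(s)$, times a Jacobi-symbol factor of the shape $\left(\tfrac{\Pi(s)^{\pm 1}}{y}\right)$. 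For the arithmetic-progression factors I would use the more delicate fact---the technical heart of Radu's argument---that the normalized generating functions $g_{a,u}$, as $u$ ranges over the residue set $P_{a,r}(b)$, are \emph{permuted} by the weight-$k_r$ slash action, $k_r=\tfrac12\omega(r)$: each $g_{a,u}\vert_{k_r}\gamma$ equals an explicit root of unity times $g_{a,u'}$ for some $u'\in P_{a,r}(b)$. This is proved by writing the coefficient extraction $\sum_n\alpha_r(an+b)q^n$ as an average of the $\eta$-quotient $\prod_{\delta\vert N}\eta^{r_\delta}(\delta\tau)$ against the additive characters modulo $a$, applying the $\eta$-transformation formula under $SL_2(\Z)$ to the resulting magnified arguments, and invoking the conditions defining $\Delta^*$---the divisibilities $24\mid\kappa aM^2\sigma_0(r)/N$, $8\mid\kappa M\omega(r)$, and $\frac{24a}{\gcd(\kappa(24b+\sigma_\infty(r)),24a)}\mid M$, together with $p\mid a\Rightarrow p\mid M$ and $r_\delta\neq 0\Rightarrow\delta\mid aM$---to ensure that the matrices produced land back in the relevant group and that the residue $b\omega^2+(\omega^2-1)\sigma_\infty(r)/24$ stays inside $P_{a,r}(b)$. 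The integer $\nu$ with $\chi_{a,r}(b)=e^{2\pi i\nu/24}$ (existence being Radu's Lemma 43) is precisely the bookkeeping device for the root of unity accumulated around the orbit.

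Next I would multiply. Since the slash action permutes the $g_{a,u}$ within $P_{a,r}(b)$ and sends $h_s$ to $\varepsilon_s(\gamma)\,h_s$, the product $F=h_s\prod_{u\in P_{a,r}(b)}g_{a,u}$ satisfies $F\vert_k\gamma=\Psi(\gamma)\,F$ for all $\gamma\in\Gamma_0(M)$, with $k=k_s+\vert P_{a,r}(b)\vert\,k_r=\tfrac12\bigl(\omega(s)+\vert P_{a,r}(b)\vert\,\omega(r)\bigr)$ and $\Psi$ an explicit multiplier. The theorem then reduces to the statement that conditions $(1)$--$(4)$ are equivalent to $k=0$ and $\Psi\equiv 1$. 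Indeed: $(1)$ is literally $k=0$; the pure $24$-th-root-of-unity part of $\Psi$ collects the contributions $\nu$, $\vert P_{a,r}(b)\vert\,a\sigma_\infty(r)$, $\sigma_\infty(s)$ and, separately, $\vert P_{a,r}(b)\vert\,\frac{aM}{N}\sigma_0(r)$, $\sigma_0(s)$, so that its triviality is exactly $(2)$ and $(3)$; and the Jacobi-symbol part of $\Psi$ is a quadratic character $y\mapsto\left(\frac{D}{y}\right)$ on $\Gamma_0(M)$, which is trivial iff $D$ is a perfect square, and assembling the level datum $\Pi(s)$ of $h_s$ with the factor $\prod_{\delta\vert N}(a\delta)^{\vert r_\delta\vert\cdot\vert P_{a,r}(b)\vert}$ coming from the magnified $\eta$'s packed inside the $g_{a,u}$ gives exactly $(4)$. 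Holomorphy of $F$ on $\mathbb H$ is immediate because $\eta$ is holomorphic and nonvanishing there; since no holomorphy condition at the cusps is imposed, the conclusion is \emph{weak} holomorphy. The converse---necessity of $(1)$--$(4)$---follows by specializing the transformation identity to well-chosen $\gamma$ and reading off the weight, the root of unity, and the quadratic character in turn.

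The step I expect to be the main obstacle is the middle one: proving that the slash action genuinely permutes the family $\{g_{a,u}:u\in P_{a,r}(b)\}$ and pinning down the precise root of unity it picks up. This requires juggling, all at once, the arithmetic of the residue set $P_{a,r}(b)$, a Chinese-remainder analysis forced by the prime-by-prime conditions relating $a$, $M$ and the divisors $\delta\mid aM$, and the intricate cocycle in the $\eta$-transformation formula under $SL_2(\Z)$; essentially every divisibility hypothesis defining $\Delta^*$ is consumed here. By contrast, once that transformation law is available, converting the combined multiplier $\Psi$ into the clean conditions $(1)$--$(4)$ is a bounded, if tedious, computation.
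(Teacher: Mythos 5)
First, a point of reference: the paper does not prove this statement at all. It is quoted verbatim (up to notation) from Radu's paper \cite{Radu} and used as a black box, so there is no in-paper proof to measure your outline against; what you have written is a reconstruction of how the proof goes in the cited source. Your architecture does match Radu's actual strategy: separate transformation laws for the $\eta$-quotient factor and for the $g_{a,u}$, a permutation action of $\Gamma_0(M)$ on the family $\{g_{a,u}: u\in P_{a,r}(b)\}$, and a final matching of the accumulated multiplier against conditions (1)--(4), with (1) giving weight zero, (2)--(3) killing the $24$th-root-of-unity part, and (4) killing the quadratic character.

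That said, as a proof the proposal has a genuine gap, and you identify it yourself: the assertion that the weight-$k_r$ slash action sends each $g_{a,u}$ to a root of unity times some $g_{a,u'}$ with $u'\in P_{a,r}(b)$, and that the product of these roots of unity over the orbit is exactly $\chi_{a,r}(b)=e^{2\pi i\nu/24}$, is stated but nowhere established. This is not a routine verification one can wave at: it occupies the bulk of Radu's paper (his Lemmas 29--44 and the surrounding apparatus), requires decomposing $\gamma\in\Gamma_0(M)$ against the matrices implementing the coefficient extraction $n\mapsto an+b$, and is precisely where every divisibility condition in the definition of $\Delta^*$ is consumed. Until that lemma is proved, the "bounded, if tedious, computation" in your last step has no input: you cannot even assert that $F$ transforms by \emph{some} multiplier under $\Gamma_0(M)$, let alone read off when that multiplier is trivial. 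Two smaller issues: meromorphy of $F$ at the cusps is not "automatic" but is itself a consequence of the same transformation analysis (each $g_{a,u}$ must be shown to have a finite-order $q$-expansion at every cusp of $\Gamma_0(M)$); and the necessity direction needs more than "specializing to well-chosen $\gamma$" --- one must know that the character $y\mapsto\left(\frac{D}{y}\right)$ restricted to $\Gamma_0(M)$ is nontrivial whenever $D$ is not a square, which uses Dirichlet's theorem or an equivalent to produce suitable $y$ coprime to $M$. So the outline is a faithful map of the terrain, but the mountain in the middle has not been climbed.
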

\noindent (Note that, in the above statement, $|r_{\delta}|$ refers to an absolute value while $|P_{a,r}(b)|$ indicates the cardinality of a set.) We now present results of Ligozat and Radu that bound the order of the cusps for an $\eta$-quotient and the modular function $F(s,r,a,b)$, respectively. Our statements slightly differ from the original papers. 

\begin{theorem}[\cite{Koblitz,Ligo,Ono}]\label{ligozat} Let $N$ be a positive integer. If $f(\tau)=\prod_{\delta \vert N} \eta^{r_\delta} (\delta \tau)$ is an $\eta$-quotient satisfying the conditions of Theorem \ref{ghn} for $\Gamma_0(N)$, then $f$ can only have cusps at rational numbers of the form $\frac{\gamma}{\mu}$, where $\mu\vert N$ and $\gcd(\gamma,\mu)=1$. When $f$ has a cusp $\frac{\gamma}{\mu}$, its order is the absolute value of
\begin{equation}
\frac{N}{24} \sum_{\delta \vert N} \frac{\left(\gcd(\mu,\delta)\right)^2\cdot r_\delta}{\gcd \left(\mu,\frac{N}{\mu}\right)\cdot \mu \delta}.
\end{equation}
\end{theorem}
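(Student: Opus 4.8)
The plan is to read off, at each cusp $s=\gamma/\mu$ of $\Gamma_0(N)$, the leading exponent of the $q$-expansion of $f$ directly from the transformation law of the Dedekind $\eta$-function. Recall that the order of a weight-$k$ form $f$ at $s$ is computed by choosing $\sigma_s=\left(\begin{smallmatrix}\gamma & \ast\\ \mu & d\end{smallmatrix}\right)\in SL_2(\Z)$ with $\sigma_s(\infty)=s$, passing to $(f|_k\sigma_s)(\tau)=(\mu\tau+d)^{-k}f(\sigma_s\tau)$, and recording the smallest power of $q_h:=e^{2\pi i\tau/h}$ occurring, where $h$ is the width of $s$ in $\Gamma_0(N)$. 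Since $f=\prod_{\delta\mid N}\eta^{r_\delta}(\delta\tau)$ is multiplicative in its factors and the weight-$k$ automorphy factor is killed by $|_k$, it is enough to track the leading exponent of each $\eta(\delta\sigma_s\tau)$ and then renormalize by $h$. As a preliminary step one recalls the classification of the cusps of $\Gamma_0(N)$: every cusp is $\Gamma_0(N)$-equivalent to some $\gamma/\mu$ with $\mu\mid N$ and $\gcd(\gamma,\mu)=1$; this yields the ``$\mu\mid N$'' clause of the statement.

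The core computation is a matrix decomposition. Writing $g=\gcd(\delta,\mu)$, one verifies
\[
\begin{pmatrix}\delta & 0\\ 0 & 1\end{pmatrix}\begin{pmatrix}\gamma & \ast\\ \mu & d\end{pmatrix}=\begin{pmatrix}a' & b'\\ c' & d'\end{pmatrix}\begin{pmatrix}g & e\\ 0 & \delta/g\end{pmatrix},\qquad \left(\begin{smallmatrix}a' & b'\\ c' & d'\end{smallmatrix}\right)\in SL_2(\Z),
\]
with $a'=\delta\gamma/g$ and $c'=\mu/g$, the remaining entries and the integer $e$ being determined (using $\gcd(\gamma,\mu)=1$) so that the left-hand factor has determinant one. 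Applying the $\eta$-transformation formula $\eta(M\tau)=\varepsilon(M)(c_M\tau+d_M)^{1/2}\eta(\tau)$ for $M\in SL_2(\Z)$, where $\varepsilon(M)$ is a $24$th root of unity, gives $\eta(\delta\sigma_s\tau)=\varepsilon'(c'\tau_1+d')^{1/2}\eta(\tau_1)$ with $\tau_1=\tfrac{g\tau+e}{\delta/g}=\tfrac{g^2}{\delta}\tau+\tfrac{ge}{\delta}$. From $\eta(\tau_1)=e^{2\pi i\tau_1/24}\prod_{m\ge1}(1-e^{2\pi im\tau_1})$ one reads that, after discarding the automorphy factor and a nonzero constant, the leading exponent of $\eta(\delta\sigma_s\tau)$ in $q$ is $\tfrac{1}{24}\cdot\tfrac{g^2}{\delta}=\tfrac{\gcd(\delta,\mu)^2}{24\delta}$. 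A short check using $c'\tau_1+d'=\tfrac{g}{\delta}(\mu\tau+d)$ confirms that the product of the factors $(c'\tau_1+d')^{r_\delta/2}$ equals a constant times $(\mu\tau+d)^{k}$, so $f|_k\sigma_s$ genuinely has a $q$-expansion.

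Summing over $\delta\mid N$ with multiplicities $r_\delta$ gives $\sum_{\delta\mid N}r_\delta\,\tfrac{\gcd(\delta,\mu)^2}{24\delta}$ as the leading exponent of $f|_k\sigma_s$ in $q$; converting to powers of $q_h$ and inserting the classical width formula $h=N/(\mu\gcd(\mu,N/\mu))$ produces
\[
\frac{N}{24}\sum_{\delta\mid N}\frac{\gcd(\mu,\delta)^2\,r_\delta}{\gcd(\mu,N/\mu)\,\mu\,\delta},
\]
whose sign records whether $\gamma/\mu$ is a zero or a pole of $f$ and whose absolute value is the stated order. The Gordon--Hughes--Newman hypotheses $24\mid\sigma_\infty(r)$, $24\mid\sigma_0(r)$ are what make $f$ a modular form on $\Gamma_0(N)$ in the first place and force these orders to be integers (the constants $e^{2\pi i\tau_1/24}$ and the $\eta$-multipliers assembling into a Dirichlet character), but they play no role in the leading-exponent bookkeeping.

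The step I expect to be the main obstacle is the explicit decomposition above: checking that the auxiliary entries and $e$ can be chosen with $\left(\begin{smallmatrix}a' & b'\\ c' & d'\end{smallmatrix}\right)\in SL_2(\Z)$ and lower-left entry exactly $\mu/\gcd(\delta,\mu)$, and then propagating the affine substitution $\tau\mapsto\tfrac{g^2}{\delta}\tau+\tfrac{ge}{\delta}$ and the $24$th-root-of-unity factors consistently through the product over $\delta$. This is where each $\gcd$ in the final formula originates, and a stray factor of $\delta$, $\mu$, or $\gcd(\mu,N/\mu)$ is easy to introduce; specializing to the cusp $\infty$ (where $\mu=N$ and the answer must reduce to $\sigma_\infty(r)/24$) and to $0$ (where $\mu=1$ and it must reduce to $\tfrac{N}{24}\sum_\delta r_\delta/\delta$) is a dependable sanity check.
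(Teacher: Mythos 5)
Your sketch is correct, and it reproduces the standard proof of Ligozat's formula: decompose $\left(\begin{smallmatrix}\delta & 0\\ 0 & 1\end{smallmatrix}\right)\sigma_s$ as an $SL_2(\Z)$ matrix times $\left(\begin{smallmatrix}g & e\\ 0 & \delta/g\end{smallmatrix}\right)$ with $g=\gcd(\delta,\mu)$, apply the $\eta$-transformation law to read off the leading exponent $\gcd(\delta,\mu)^2/(24\delta)$, sum with multiplicities $r_\delta$, and rescale by the cusp width $N/(\mu\gcd(\mu,N/\mu))$; your sanity checks at $\mu=N$ and $\mu=1$ (recovering $\sigma_\infty(r)/24$ and $\sigma_0(r)/24$) come out right. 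Note that the paper does not prove this theorem at all --- it is quoted as a classical result of Ligozat with references to Koblitz and Ono, where essentially your argument appears --- so there is nothing internal to compare against; your proposal is simply the standard proof from the cited literature.
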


\begin{theorem}[\cite{Radu}, Theorem 47 and Equations (56-57)]\label{RaduThm47}
For $F(s,r,a,b)$ and the corresponding $r\in \Z^{d(N)}$ and $s\in \Z^{d(M)}$, as constructed above and satisfying the conditions of Theorem \ref{RaduThm}, the order of $F$ at any cusp is uniformly bounded from above by the absolute value of 
\begin{equation} {{\emph{min}} \atop {n \in \N}} \frac{M}{\gcd(n^2,M)} \left( \vert P_{a,r}(b) \vert {{\emph{min}} \atop {{m \vert a} \atop {\gcd(m,n)=1}}} \frac{1}{24} \sum_{\delta \vert N} r_\delta \frac{(\gcd(\delta m,an))^2}{\delta a} + \frac{1}{24} \sum_{\delta \vert M} s_\delta \frac{(\gcd(\delta,n))^2}{\delta} \right).\nonumber
\end{equation}
\end{theorem}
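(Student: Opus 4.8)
The idea is to bound $\ord_{\mathfrak{c}}F$ at an arbitrary cusp $\mathfrak{c}=\gamma/\mu$ of $\Gamma_0(M)$ (so $\mu\mid M$ and $\gcd(\gamma,\mu)=1$) by splitting $F(s,r,a,b)$ into its two obvious factors and reducing each to the $\eta$-quotient cusp-order formula of Theorem~\ref{ligozat}. Since the order at a cusp is additive over products, $\ord_{\mathfrak{c}}F=\ord_{\mathfrak{c}}\bigl(\prod_{\delta\mid M}\eta^{s_\delta}(\delta\tau)\bigr)+\sum_{u\in P_{a,r}(b)}\ord_{\mathfrak{c}}g_{a,u}$. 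The first summand is given directly by Theorem~\ref{ligozat}, applied with $N$ replaced by $M$ and $r$ by $s$: it equals $\frac{M}{24}\sum_{\delta\mid M}\frac{\gcd(\mu,\delta)^2 s_\delta}{\gcd(\mu,M/\mu)\,\mu\,\delta}$, and on taking the integer parameter $n=\mu$ (for which $\gcd(n^2,M)=\mu\gcd(\mu,M/\mu)$ and $\gcd(\delta,n)=\gcd(\delta,\mu)$ for $\delta\mid M$) this becomes $\frac{M}{\gcd(n^2,M)}\cdot\frac{1}{24}\sum_{\delta\mid M}s_\delta\frac{\gcd(\delta,n)^2}{\delta}$, which is exactly the $s$-part of the asserted bound.

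The heart of the matter is the term $\sum_{u}\ord_{\mathfrak{c}}g_{a,u}$. Writing $h(\tau):=\prod_{\delta\mid N}\eta^{r_\delta}(\delta\tau)=q^{\sigma_\infty(r)/24}\sum_{n\ge0}\alpha_r(n)q^n$, the definition of $g_{a,u}$ just isolates the progression $n\equiv u\ (\bmod\ a)$ in the Fourier expansion of $h$ and renormalises the leading $q$-power; the discrete Fourier identity then expresses it as $g_{a,u}=\frac{1}{a}\sum_{j=0}^{a-1}\zeta_a^{-uj}\,\zeta_{24a}^{-j\sigma_\infty(r)}\,h\!\bigl(\tfrac{\tau+j}{a}\bigr)$, a fixed linear combination — with root-of-unity coefficients — of the translates $\tau\mapsto h\bigl(\tfrac{\tau+j}{a}\bigr)$, $0\le j<a$. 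Because the order of a sum of $q$-expansions is at least the minimum of the orders of the summands, $\ord_{\mathfrak{c}}g_{a,u}\ge\min_{0\le j<a}\ord_{\mathfrak{c}'_j}h$, where $\mathfrak{c}'_j$ is the image of $\mathfrak{c}$ under $\tau\mapsto\tfrac{\tau+j}{a}$; crucially, the right-hand side is independent of $u$. Acting on $\PP^1(\Q)$, the map $\tau\mapsto\tfrac{\tau+j}{a}$ sends $\gamma/\mu$ to $\tfrac{\gamma+j\mu}{a\mu}$, of reduced denominator $a\mu/m$ with $m=\gcd(\gamma+j\mu,a)$; since $\gcd(\gamma,\mu)=1$, the values of $m$ that occur as $j$ ranges over $0,\dots,a-1$ are precisely the divisors $m\mid a$ with $\gcd(m,\mu)=1$, i.e.\ $\gcd(m,n)=1$ with $n=\mu$. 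Plugging these denominators into Theorem~\ref{ligozat} for the $\eta$-quotient $h$ on $\Gamma_0(N)$ — legitimate because $\delta\mid aM$ for every $\delta$ with $r_\delta\ne0$ — and reorganising the resulting greatest common divisors into the single expression $\gcd(\delta m,an)$ gives $\ord_{\mathfrak{c}}g_{a,u}\ge\frac{M}{\gcd(n^2,M)}\min_{m\mid a,\ \gcd(m,n)=1}\frac{1}{24}\sum_{\delta\mid N}r_\delta\frac{\gcd(\delta m,an)^2}{\delta a}$. Summing over the $|P_{a,r}(b)|$ values of $u$, adding the estimate from the first paragraph, and finally passing to $\min_{n\in\N}$ to make the bound independent of the cusp (each cusp of $\Gamma_0(M)$ being reached by $n=\mu$) yields exactly the displayed quantity; as $F$ is weakly holomorphic of weight $0$, this lower bound on $\ord_{\mathfrak{c}}F$ is the same as an upper bound, in absolute value, on the pole order of $F$ at any cusp.

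The step I expect to be the genuine obstacle is the $\eta$-multiplier bookkeeping compressed into the second paragraph: one has to push $h$ through each map $\tau\mapsto\tfrac{\tau+j}{a}$ and then through the scaling matrices that normalise the cusps of $\Gamma_0(N)$ and $\Gamma_0(M)$, and check that the root-of-unity factors which accumulate along the way — those controlled by $\nu$ and $\chi_{a,r}(b)$ as in Theorem~\ref{RaduThm} — only ever disturb leading coefficients and never $q$-valuations, so that they may be dropped from the order estimate. Making the dictionary between the divisor $\mu\mid M$, the shift $j$, the divisor $m\mid a$, and the single integer $n$ completely explicit, and verifying that the chosen $n$ really realises each cusp, is what forces the somewhat opaque shape of the final formula; this is precisely the content of \cite{Radu}, Theorem~47 and Equations~(56)--(57), which one may invoke rather than reprove.
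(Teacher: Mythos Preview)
The paper does not give its own proof of this statement: Theorem~\ref{RaduThm47} is quoted from \cite{Radu} as a black-box tool (hence the attribution ``Theorem~47 and Equations~(56--57)'' in the header), and the authors simply apply it later in the proof of Theorem~\ref{identities1}. So there is no in-paper argument to compare your sketch against.

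That said, your sketch is a faithful outline of how Radu's own argument runs. The decomposition of $F$ into the $\eta$-quotient prefactor $\prod_{\delta\mid M}\eta^{s_\delta}(\delta\tau)$ and the factors $g_{a,u}$, the direct application of Theorem~\ref{ligozat} to the former, the discrete Fourier inversion expressing $g_{a,u}$ as a root-of-unity combination of the translates $h\bigl(\tfrac{\tau+j}{a}\bigr)$, and the identification of the relevant cusp denominators with divisors $m\mid a$ satisfying $\gcd(m,n)=1$ --- these are exactly the steps Radu carries out. Your closing caveat is also well placed: the delicate part is indeed the $\eta$-multiplier bookkeeping (governed by $\nu$ and $\chi_{a,r}(b)$) needed to confirm that the accumulated root-of-unity factors never shift $q$-valuations, only leading coefficients. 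You correctly flag this as the content one either reproves carefully or, as the present paper does, simply imports from \cite{Radu}.
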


Finally, a classical result of Sturm \cite{sturm} gives sufficient conditions to equate two modular forms modulo a given prime. We phrase it in the following way.
\begin{theorem} [\cite{sturm}]\label{sturmthe} Let $p$ be a prime number, and $f(\tau) = \sum_{n=n_0}^\infty a(n) q^n$ and $g(\tau) = \sum_{n=n_1}^\infty b(n) q^n$  be holomorphic modular forms of weight $k$ for $\Gamma_0(N)$ of characters $\chi_1$ and $\chi_2$, respectively, where $n_0,n_1\in \mathbb{N}$.  If either $\chi_1=\chi_2$ and
\begin{equation}
a(n) \equiv b(n) \pmod{p} \quad \text{for all} \quad n\le \frac{kN}{12}\cdot \prod_{d{\ }\emph{prime};{\ } d \vert N} \left(1+\frac{1}{d^2}\right),\nonumber
\end{equation}
or $\chi_1\neq\chi_2$ and
\begin{equation}
a(n) \equiv b(n) \pmod{p} \quad \text{for all} \quad n\le \frac{kN^2}{12}\cdot \prod_{d{\ }\emph{prime};{\ } d \vert N} \left(1-\frac{1}{d^2}\right),\nonumber
\end{equation}
then $f(\tau) \equiv g(\tau) \pmod{p}$ (i.e., $a(n) \equiv b(n) \pmod{p}$ for all $n$).
\end{theorem}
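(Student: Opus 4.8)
The plan is to prove Theorem~\ref{sturmthe} by Sturm's classical method: reduce the congruence to showing that a single modular form vanishes mod $p$, push that form up to level one by multiplying together its translates over a set of cosets, and then apply the valence formula on $\SL_2(\Z)$. Concretely, I would first isolate the statement one really needs: \emph{if $h=\sum_{n\ge 0}c(n)q^n$ is a holomorphic modular form of weight $k$ for a congruence subgroup $\Gamma$ with $\Gamma(N)\subseteq\Gamma\subseteq\SL_2(\Z)$ whose Fourier coefficients are $p$-integral, and if $\ord_p(h):=\min\{n:\ c(n)\not\equiv 0\pmod p\}$ exceeds $\tfrac{k}{12}[\SL_2(\Z):\overline{\Gamma}]$, then $h\equiv 0\pmod p$.} Theorem~\ref{sturmthe} follows from this: when $\chi_1=\chi_2$, put $h=f-g\in M_k(\Gamma_0(N),\chi_1)$ and apply the statement with $\Gamma=\Gamma_0(N)$, noting that, up to the usual $k/12$ factor, $[\SL_2(\Z):\Gamma_0(N)]$ is the first of the two displayed bounds; when $\chi_1\ne\chi_2$ the forms no longer lie in a common $\chi$-eigenspace, so instead one uses that $f,g\in M_k(\Gamma_1(N))$, puts $h=f-g\in M_k(\Gamma_1(N))$, and applies the statement with $\Gamma=\Gamma_1(N)$ — and since $[\SL_2(\Z):\Gamma_1(N)]=N^2\prod_{d\mid N,\ d\ \mathrm{prime}}(1-1/d^2)$, this reproduces precisely the second displayed bound. (Equivalently one could first kill the nebentypus by twisting with a weight-zero Eisenstein unit, but passing to $\Gamma_1(N)$ is the cleanest route.)

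For the isolated statement, suppose $h\not\equiv 0\pmod p$ and let $\gamma_1=I,\gamma_2,\dots,\gamma_m$ represent $\overline{\Gamma}\backslash\overline{\SL_2(\Z)}$, with $m=[\SL_2(\Z):\overline{\Gamma}]$. Form the norm $G=\prod_{i=1}^{m}h\vert_k\gamma_i$. Because $\{\gamma_i\gamma\}$ is again a full set of coset representatives for every $\gamma\in\SL_2(\Z)$ and $h$ is $\Gamma$-invariant, we get $G\vert_{km}\gamma=G$; thus, after replacing $G$ by a fixed power to trivialize the finite-order character on $\SL_2(\Z)^{\mathrm{ab}}\cong\Z/12$ if necessary, $G$ is a genuine holomorphic modular form on $\SL_2(\Z)$, and the valence formula there bounds its vanishing order at the cusp by the weight over $12$, i.e.\ $\ord_\infty(G)\le\tfrac{km}{12}$ when $G\ne 0$ (the bound is sharp, as $\Delta^{km/12}$ shows when $12\mid km$). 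On the other hand, granting that the whole configuration may be reduced mod $p$ — in particular that each $h\vert_k\gamma_i$, being holomorphic at $\infty$, has a well-defined nonzero reduction in a formal power-series ring over $\overline{\F}_p$ in a suitable fractional power of $q$ — we have $\overline{G}=\prod_i\overline{h\vert_k\gamma_i}\ne 0$ in an integral domain, whence $\ord_\infty(\overline{G})=\sum_i\ord_\infty(\overline{h\vert_k\gamma_i})\ge\ord_\infty(\overline{h})=\ord_p(h)$. Comparing the two inequalities gives $\ord_p(h)\le\tfrac{km}{12}=\tfrac{k}{12}[\SL_2(\Z):\overline{\Gamma}]$, contradicting the hypothesis; hence $h\equiv 0\pmod p$, i.e.\ $a(n)\equiv b(n)\pmod p$ for all $n$.

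The step I expect to be the genuine obstacle — and which is exactly the technical heart of Sturm's paper, so we are content to quote it — is the $p$-integrality and nonvanishing bookkeeping invoked above. A priori the translates $h\vert_k\gamma_i$ have $q$-expansions only over $\Z[\zeta_N,1/N]$, so when $p\mid N$ it is not obvious that $G$ reduces mod $p$ at all, nor that the reduction is nonzero. The resolution is that $G$, being a genuine level-one form, lies in the $\Z$-lattice $M_{km}(\SL_2(\Z);\Z)$ spanned by the Miller basis — whose members have $q$-expansions in $\Z[[q]]$ with unit leading coefficients — so $G$ is automatically $p$-integral once finitely many of its coefficients are, and one arranges the reductions so that the identity $\overline{G}=\prod_i\overline{h\vert_k\gamma_i}$ and the nonvanishing of each factor hold. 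Granting this input, everything else — the coset product landing in level one, the valence formula on $\SL_2(\Z)$, and the split into the two nebentypus cases — is routine.
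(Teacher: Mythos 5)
The paper does not prove this statement: it is quoted (in the authors' own phrasing) as a classical result of Sturm \cite{sturm}, so there is no internal argument to compare yours against, and I am judging the sketch on its own. Your architecture is the standard one — reduce to the vanishing mod $p$ of a single form $h$, take the norm $G=\prod_i h\vert_k\gamma_i$ over coset representatives to land in level one, and split the two nebentypus cases via $\Gamma_0(N)$ versus $\Gamma_1(N)$ — but the step where you ``compare the two inequalities'' has a genuine gap. The valence formula bounds $\ord_\infty(G)$, the order of vanishing at $\infty$ of the characteristic-zero form $G$, by $km/12$. What your argument needs is an \emph{upper} bound on $\ord_\infty(\overline{G})$, the index of the first Fourier coefficient of $G$ not divisible by $p$; since $\ord_\infty(\overline{G})\ge\ord_\infty(G)$ always, the valence formula gives no such bound, and the two inequalities you write down do not meet. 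The statement actually required is the level-one Sturm lemma: a form in $M_{km}(\SL_2(\Z))$ with $p$-integral coefficients whose coefficients vanish mod $p$ for all $n\le km/12$ is identically zero mod $p$. This does follow from the Miller basis you already invoke for integrality (the basis is unipotent-triangular over $\Z$ and $\dim M_{km}\le km/12+1$, so the first $\dim M_{km}$ coefficients determine the reduction), so the repair uses only ingredients you have on the table — but as written the deduction fails, and the valence formula is the wrong tool to cite for it.

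Two smaller points. First, your assertion that the first displayed bound is $\frac{k}{12}[\SL_2(\Z):\Gamma_0(N)]$ does not match the paper as printed: that index is $N\prod_{d\mid N,\ d\ \mathrm{prime}}(1+1/d)$, whereas the paper has $(1+1/d^2)$; your argument yields the standard bound with $1+1/d$ (almost certainly what is intended — the printed exponent looks like a typo), not the literally smaller one stated. Second, deferring the $p$-integrality and nonvanishing of the expansions $h\vert_k\gamma_i$ (especially when $p\mid N$) to Sturm is defensible here, since the paper itself only cites the theorem; but note that this point together with the level-one lemma above constitutes essentially the entire content of Sturm's proof, so what remains of your argument after these two citations is the coset and character bookkeeping.
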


\section{Proofs}

We begin with a proof for Corollary to Conjecture \ref{mainconj}. The argument for Corollary to Conjecture \ref{biggerconj} follows by essentially the same reasoning, and therefore will be omitted (see Remark \ref{importantRemark}). We restate the corollary here for reference: 

\begin{repcorollary}{mainconj} 
Fix any positive integer $a\not \equiv 0\ (\bmod\ 3)$, and suppose all necessary $\delta_i$ exist. If $\delta_a>0$, then $\delta_1>0$. 
\end{repcorollary} 

\begin{proof} We proceed by induction on $a$, which we can assume to be odd for evident parity reasons. The base case $a=1$ is clear. We first recall from \cite{JKZ} a proof of the case $a=5$, as this will give insight into the logic we will use for the remainder of the argument. We use the corresponding identity given in (\ref{eq:conj}) (which was proved in \cite{JKZ}), namely,
\begin{equation}
q\sum_{n=0}^{\infty} p(5n+4)q^n \equiv \frac{1}{\prod_{i\geq 1} (1-q^i)^5} +\frac{1}{\prod_{i\geq 1} (1-q^{5i})}.\nonumber
\end{equation}

Suppose that $\delta_5>0$ and $\delta_1=0$. Therefore, $\#\{n\leq x: p_5(n)\ \text{is odd}\}=\delta_5 x+o(x)$, while the number of odd coefficients up to $x$ of $1/\prod_{i\geq 1} (1-q^{5i})=\sum_{n=0}^{\infty} p(n)q^{5n}$ is $o(x)$. Hence, the number of odd coefficients up to $x$ of $\sum_{n=0}^{\infty} p(5n+4)q^n$ is also $\delta_5 x+ o(x)$, which yields $\delta_1\geq \delta_5/5>0$, a contradiction. This shows the case $a=5$.

Now suppose the result holds for all cases up to and including an arbitrary $a-4\geq 5$, and assume $\delta_1=0$. The next case is either $a-2$ or $a$. We assume it is $a$ and, consequently, that $\delta_a>0$; the proof for the case $a-2$ is entirely similar. By Conjecture \ref{mainconj}, there exists some identity of the form
\begin{equation}
q^k \sum_{n=0}^{\infty} p(an+b)q^n \equiv \sum_{d|a} \sum_{j=0}^{\lfloor k/d \rfloor} \frac{\epsilon^1_{a,d,j}\ q^{dj}}{\prod_{i\geq 1} (1-q^{di})^{\frac{a}{d}-24j}},\nonumber
\end{equation}
where $\epsilon^1_{a,1,0}=1$. By assumption, the number of odd coefficients of $q^k \sum_{n=0}^{\infty} p(an+b)q^n$ up to $x$ must be $o(x)$, and the number of odd coefficients of $1/\prod_{i\geq 1} (1-q^i)^{a}$ is $\delta_{a} x+o(x)$. 

Therefore, at least one additional term in the finite double sum on the right side must give positive density. Suffices from here to note that an additional term giving positive density must be of the form 
\begin{equation}
\frac{q^{d_0j_0}}{\prod_{i\geq 1} (1-q^{d_0i})^{\frac{a}{d_0}-24j_0}},\nonumber
\end{equation}
for suitable $d_0$ and $j_0$ with $(d_0,j_0)\neq (1,0)$, and that $a/d_0-24j_0$ is always $\pm\ 1\ (\bmod\ 6)$. By the inductive assumption, since $\delta_{\frac{a}{d_0}-24j_0}>0$, then $\delta_1>0$. This contradiction gives the result. 
\end{proof}

\begin{remark}\label{importantRemark}
The proof of Corollary to Conjecture \ref{biggerconj} is entirely similar. Indeed, we can again consider $A$ odd and then set $A=at$, with $a$ and $t$ as in the conjecture. Now the same reasoning yields that, if $\delta_A>0$ and all the relevant $\delta_i$ exist, at least one between $\delta_1$ and $\delta_3$ must be positive. 
\end{remark}

We next show, as a sample, one special case of Conjecture \ref{mainconj}, namely when $a=31$ and $t=1$. (Consequently this proves that $\delta_{31}>0\implies \delta_1>0$, always under the assumption the necessary $\delta_{i}$ exist.) In principle, \emph{any given identity} of Conjectures \ref{mainconj} and \ref{biggerconj} can be shown using the exact same strategy, as we will explicitly outline in the proof. 

We will employ a technique introduced by Radu \cite{Radu}, which was first used in \cite{JKZ}. Our approach can be summarized as follows: We start with the identity for $q^k\sum_{n=0}^\infty p_t(an+b)q^n$ given by Conjecture \ref{biggerconj}, and consider the subset of $\Delta^*$ where $N=1$ and $r=(r_1)=(-t)$. Then we determine an integer $M\geq 1$ and an $s$-vector $s$ such that the corresponding function $F$ satisfies the conditions of Theorem \ref{RaduThm}. If in the process we encounter modular forms that are weakly holomorphic, we clear all poles by multiplying by another modular form of sufficiently high order (specifically, some $\eta(4\tau)^{24i}$ will always work). This will be done according to Theorems \ref{ligozat} and \ref{RaduThm47}. As a final step, we use Sturm's bound (Theorem \ref{sturmthe}) to complete the proof. 

Note that as $a$ increases, a much higher power of $\eta(4\tau)^{24}$ is needed to clear out the poles, thus making the computational cost necessary to verify Sturm's bound significantly greater. Further, there appears to be no infinite family of congruences coming from either Conjecture \ref{mainconj} or \ref{biggerconj} for which one can apply a \emph{uniform} Sturm bound, nor any such family with a bounded number of terms. In light of the above reasons, we have not yet been able to prove our conjectures in any form beyond a case-by-case basis.

\begin{theorem}\label{identities1} 
\begin{equation}\label{31andStuff}
q^2\sum_{n=0}^{\infty} p(31n+22)q^n \equiv \frac{1}{\prod_{i\geq 1} (1-q^i)^{31}}+\frac{q}{\prod_{i\geq 1} (1-q^i)^{7}}+\frac{1}{\prod_{i\geq 1} (1-q^{31i})}.
\end{equation}
\end{theorem}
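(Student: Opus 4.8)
The plan is to verify the congruence \eqref{31andStuff} by the modular-forms strategy outlined in the paragraph preceding the statement. First I would rewrite the claimed identity in a form amenable to Radu's machinery. Set $a=31$, $t=1$, $b=22$ (note $22\equiv 24^{-1}\pmod{31}$, since $24\cdot 22 = 528 = 17\cdot 31 + 1$), and $k=\ceil{\tfrac{31^2-1}{24\cdot 31}}=\ceil{\tfrac{960}{744}}=2$, matching the left side $q^2\sum_n p(31n+22)q^n$. On the right side the three terms correspond to the divisors $d=1$ (with $j=0$, giving $\prod(1-q^i)^{-31}$, forced $\epsilon_{31,1,0}=1$), $d=1$ again (with $j=1$, giving $q/\prod(1-q^i)^{7}$ since $31-24=7$), and $d=31$ (with $j=0$, giving $\prod(1-q^{31i})^{-1}$); all other admissible $(d,j)$ have $\epsilon=0$. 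So the content of the theorem is precisely the assertion that \emph{these} three $\epsilon$'s are the correct ones.

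Next I would move everything to one side: it suffices to show that
\begin{equation}
q^2\sum_{n=0}^{\infty} p(31n+22)q^n - \frac{1}{\prod_{i\geq 1}(1-q^i)^{31}} - \frac{q}{\prod_{i\geq 1}(1-q^i)^{7}} - \frac{1}{\prod_{i\geq 1}(1-q^{31i})} \equiv 0 \pmod 2,\nonumber
\end{equation}
equivalently that the corresponding combination of $\eta$-quotients and the Radu operator output $g_{a,b}$ is a modular form whose Fourier coefficients all vanish mod $2$. Concretely, I take $N=1$, $r=(r_1)=(-1)$ (so $\alpha_r(n)=p(n)$ up to the shift, since $\prod(1-q^i)^{-1}=\sum p(n)q^n$), and I invoke Theorem \ref{RaduThm}: I must choose $M\geq 1$ (divisible by every prime dividing $a=31$, so $31\mid M$, and also needing $2\mid M$ to later use $\eta(4\tau)$) and an $s$-vector $s\in\Z^{d(M)}$ so that the four numerical conditions (4.1)--(4.4) of Theorem \ref{RaduThm} hold, making $F(s,r,31,22)$ a weakly holomorphic modular form of weight $0$ and trivial character on $\Gamma_0(M)$. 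I would first compute $P_{31,r}(22)$ — the set of residues $22\omega^2+(\omega^2-1)(-1)/24 \pmod{31}$ over $\omega$ coprime to $24$ — determine its cardinality, and then solve the resulting divisibility/square constraints for $s$; the Gordon--Hughes--Newman conditions (Theorem \ref{ghn}) guarantee the auxiliary $\eta$-quotients on the right-hand side are themselves modular forms of the appropriate level (a common multiple of $1$, $7$, $31$ will appear, so $M$ should be a multiple of $31$, and the $\prod(1-q^i)^{-7}$ and $\prod(1-q^i)^{-31}$ pieces will force extra $\eta(\delta\tau)$ factors to balance $\sigma_\infty,\sigma_0\equiv 0\pmod{24}$).

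Once everything is realized inside a single space of modular forms on some $\Gamma_0(M)$, the weakly holomorphic pieces must be cleared: using Theorems \ref{ligozat} and \ref{RaduThm47} I would bound the orders at all cusps of $F$ and of each $\eta$-quotient term, find the most negative such order, and multiply the entire congruence through by $\eta(4\tau)^{24i}$ for $i$ large enough that the product is a \emph{holomorphic} modular form of some weight $k'$ on $\Gamma_0(M)$ — this is legitimate since $\eta(4\tau)^{24}=q^4\prod(1-q^{4i})^{24}$ has all integer coefficients and reduces ncontributions uniformly. Finally I would apply Sturm's bound (Theorem \ref{sturmthe}) with $p=2$: compute $\tfrac{k' M}{12}\prod_{d\mid M,\ d\text{ prime}}(1+d^{-2})$ (or the character-mismatch version if needed, though after clearing poles one arranges trivial/equal characters), and check by direct $q$-expansion that the two sides agree modulo $2$ up to that bound, which forces equality of all coefficients and hence \eqref{31andStuff}. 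The main obstacle is twofold: first, choosing $M$ and $s$ so that all four conditions of Theorem \ref{RaduThm} hold simultaneously — this is a genuine Diophantine search and is the step where the argument could fail to go through cleanly; and second, the Sturm bound grows with the power of $\eta(4\tau)^{24}$ needed to clear the poles (which is sizeable already for $a=31$), so the final coefficient comparison, while purely mechanical, is computationally heavy and must be organized carefully. I expect the $M,s$-selection to be the conceptual crux and the Sturm verification to be the practical one.
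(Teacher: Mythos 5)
Your plan is exactly the paper's proof: take $N=1$, $r=(-1)$ (so $P_{31,r}(22)=\{22\}$ since $24b\equiv t\pmod a$), find $M$ and $s$ satisfying Theorem \ref{RaduThm}, clear poles with a power of $\eta(4\tau)^{24}$ via Theorems \ref{ligozat} and \ref{RaduThm47}, and finish with Sturm. The paper carries out the search you flag as the crux with $M=62$ and $s=(s_1,s_2,s_{31},s_{62})=(3,1,4,-7)$, uses $\eta(2\tau)\equiv\eta(\tau)^2$ to put the right-hand side in weight zero, multiplies through by $\eta(4\tau)^{336}$, and verifies coefficients up to the Sturm bound $416{,}640$ by machine.
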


\begin{proof} 
We will provide as broad a proof as possible for arbitrary $a$ and $t$ as in Conjecture \ref{biggerconj}, and only look at the specific case of the statement ($a=31$ and $t=1$) in the second part of the argument.  We begin by using the function $\alpha_r$ defined at the beginning of Section 3, by setting
\begin{equation}
\sum_{n\geq 0} \alpha_r(n)q^n=\sum_{n\geq 0} p_t(n)q^n=\frac{1}{\prod_{i\geq 1} (1-q^i)^t}.
\end{equation}
Therefore, $N=1$ and $r=(r_1)=(-t)$ . Now we consider $P_{a,r}(b)$. Notice that $\sigma_{\infty} (r)=-t,$ and so 
\begin{equation}
b\omega^2+\frac{\omega^2-1}{24}\sigma_{\infty}(r) =b\omega^2+\frac{t(1-\omega^2)}{24}.\nonumber
\end{equation}

Note that $24b\equiv t\ (\bmod\ a)$, and as such, the above becomes 
\begin{equation}
b\omega^2+\frac{24b(1-\omega^2)}{24}= b\omega^2+b-b\omega^2= b \ (\bmod\ a).\nonumber
\end{equation} 
Hence $P_{a,r}(b)=\{b\}$, for any $a$, $b$, and $t$ satisfying the conditions in Conjecture \ref{biggerconj}. We now notice that
\begin{equation}
g_{a,b}(q)=q^{\frac{24b-t}{24a}}\sum_{n\geq 0} p_t(an+b)q^n;\nonumber
\end{equation} 
\begin{equation}
\chi_{a,r}(b)=e^{\frac{2\pi i(24b-t)(1-a^2)}{24a}};\nonumber
\end{equation}
\begin{equation}
\nu=\frac{(1-a^2)(24b-t)}{a}.
\end{equation}

Finally, since $|P_{a,r}(b)|=1$, by Theorem \ref{RaduThm} we have
\begin{equation}
F(s,r,a,b)(\tau)=\prod_{\delta\vert M}\eta^{s_{\delta}}(\delta\tau)\ \ q^{\frac{24b-t}{24a}}\sum_{n\geq 0} p_t(an+b)q^n,
\end{equation}
for some $M$ and some $s$-vector $s=(s_{\delta})$. 

Consider now the specific case of the statement, where $a=31$, $t=1$, and $b=22$. We have $N=1$, and $r=(r_1)=(-1)$. Additionally, we choose $M=2\cdot31=62$. As proved above, $P_{31,r}(22)=\{22\}$. Moving to the conditions of Theorem \ref{RaduThm}, standard computations imply that we are looking for an $s$-vector $s$ such that the following simultaneously hold: 
\begin{equation}
\omega(s)=1;\nonumber
\end{equation}
\begin{equation}
\sigma_{\infty}(s)\equiv 7\ (\bmod\ 24);\nonumber
\end{equation}
\begin{equation}
\sigma_{0}(s)\equiv 2\ (\bmod\ 24);\nonumber
\end{equation}
\begin{equation}
31\cdot\Pi(s)\ \text{is a perfect square.}\nonumber
\end{equation}

It is not hard to check that  $s=(s_1,s_2,s_{31},s_{62})=(3,1,4,-7)$ satisfies these conditions. This $s$-vector results in 
\begin{equation}\label{LHS}
F(s,r,31,22)(\tau)=q^{\frac{17}{24}} \frac{\eta(\tau)^3\eta(2\tau)\eta(31\tau)^{4}}{\eta(62\tau)^{7}}\sum_{n=0}^{\infty} p(31n+22)q^n.
\end{equation}
By Theorem \ref{RaduThm}, this is a modular form of weight zero. Thus, in order to obtain $F(s,r,31,22)$ on the left side of Equation (\ref{31andStuff}), we have to multiply this latter by 
\begin{equation}\label{sVector}
q^{\frac{-31}{24}}\frac{\eta(\tau)^3\eta(2\tau)\eta(31\tau)^{4}}{\eta(62\tau)^{7}}.
\end{equation}
\indent Now consider the right side of (\ref{31andStuff}). Expressing it in terms of $\eta$-quotients, it becomes:  
\begin{equation}\label{Equ10}
\frac{q^{\frac{31}{24}}}{\eta(\tau)^{31}}+\frac{q^{\frac{31}{24}}}{\eta(\tau)^7}+\frac{q^{\frac{31}{24}}}{\eta(31\tau)}.
\end{equation}

By multiplying Formulas (\ref{sVector}) and (\ref{Equ10}) together, we obtain
\begin{equation}\label{Equ11}
\frac{\eta(2\tau)\eta(31\tau)^{4}}{\eta(\tau)^{28}\eta(62\tau)^{7}}+\frac{\eta(2\tau)\eta(31\tau)^{4}}{\eta(\tau)^{4}\eta(62\tau)^{7}}+\frac{\eta(\tau)^3\eta(2\tau)\eta(31\tau)^{3}}{\eta(62\tau)^{7}}.
\end{equation} 

At this point, in order to match the weight of the modular form $F$, we want to turn (\ref{Equ11}) into a weight zero modular form as well. Using the fact that $\eta(2\tau)\equiv \eta(\tau)^2$, we can rewrite (\ref{Equ11}) as 
\begin{equation}\label{RHS}
\frac{\eta(\tau)^{32}\eta(31\tau)^{4}}{\eta(2\tau)^{29}\eta(62\tau)^{7}}+\frac{\eta(\tau)^{8}\eta(31\tau)^{4}}{\eta(2\tau)^5\eta(62\tau)^{7}}+\frac{\eta(\tau)^3\eta(2\tau)\eta(31\tau)^{3}}{\eta(62\tau)^{7}}.
\end{equation}

This is a modular form of weight zero, by Theorem \ref{ghn} along with the fact that the sum of weight zero modular forms is a weight zero modular form. It will be sufficient to show that the right sides of (\ref{LHS}) and (\ref{RHS}) are identical modulo $2$. Before using Sturm's Theorem, we must first clear all possible cusps. To do so, notice that the bound given in Theorem \ref{RaduThm47}, uniform on all cusps of $F(s,r,31,22)$, is
\begin{equation}\left | {{\min} \atop {n \in \N}} \frac{62}{\gcd(n^2,62)} \left( \vert P_{31,r}(22) \vert {{\min} \atop {{m \vert 31} \atop {\gcd(m,n)=1}}} \frac{1}{24} \sum_{\delta \vert 1} r_\delta \frac{(\gcd(\delta m,31n))^2}{31\delta } + \frac{1}{24} \sum_{\delta \vert 62} s_\delta \frac{(\gcd(\delta,n))^2}{\delta} \right)\right |\nonumber
\end{equation}
\begin{equation}
=\frac{567}{8}=70.875.\nonumber
\end{equation}

Theorem \ref{ligozat} allows us to make a similar calculations to clear the possible cusps of (\ref{RHS}), resulting in a bound on the order of 34. Thus, a power of $\eta(4\tau)$ sufficient to simultaneously clear all cusps of (\ref{LHS}) and (\ref{RHS}) is $24\cdot14= 336$. 

It follows that the identity to be checked is 
\begin{equation}
q^{\frac{17}{24}}\eta(4\tau)^{336} \frac{\eta(\tau)^3\eta(2\tau)\eta(31\tau)^{4}}{\eta(62\tau)^{7}}\sum_{n=0}^{\infty} p(31n+22)q^n\stackrel{?}{\equiv} \nonumber
\end{equation}
\begin{equation}
\eta(4\tau)^{336}\left[\frac{\eta(\tau)^{32}\eta(31\tau)^{4}}{\eta(2\tau)^{29}\eta(62\tau)^{7}}+\frac{\eta(\tau)^{8}\eta(31\tau)^{4}}{\eta(2\tau)^5\eta(62\tau)^{7}}+\frac{\eta(\tau)^3\eta(2\tau)\eta(31\tau)^{3}}{\eta(62\tau)^{7}}\right],\nonumber
\end{equation}
where both sides are holomorphic modular forms of weight $k=0+\frac{336}{2}=168$. 
A straightforward application of Theorem \ref{sturmthe} now gives a Sturm bound of at most $416,640$. Finally, a Mathematica calculation verifies that this congruence holds modulo $2$ up to that point, completing the proof. 
\end{proof}

We now present a completely algebraic proof for the case $a=49$ of Conjecture \ref{mainconj} (and therefore for the delta implication $\delta_{49}>0 \implies \delta_1>0$, under the assumption that all of $\delta_{25}$, $\delta_{7}$, and $\delta_1$ exist, since we proved algebraically in \cite{JKZ} that both $\delta_{25}>0$ and $\delta_7>0$ imply $\delta_1>0$). For sake of simplicity, we will use the notation 
\begin{equation}
f_k=f_k(q)=\prod_{i=1}^{\infty} (1-q^{ki}).\nonumber
\end{equation}
\begin{theorem}\label{identities2} 
\begin{equation}
q^3 \sum_{n=0}^{\infty} p(49n+47)q^n \equiv \frac{1}{f_1^{49}}+\frac{q}{f_1^{25}}+\frac{q^2}{f_1}+\frac{1}{f_7^7}.\nonumber
\end{equation}
\end{theorem}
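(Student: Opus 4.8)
The plan is to reduce the claimed identity, via the already--proven case $a=7$, to a single congruence about the $7$-dissection of $1/f_1^{7}$, and then to prove that congruence by classical $q$-series manipulations, with no appeal to modular forms. I would start from the case $(a,t)=(7,1)$ of Theorem \ref{JKZ2}, namely
\begin{equation}
q\sum_{m\ge 0}p(7m+5)q^m\equiv\frac{1}{f_1^{7}}+\frac{1}{f_7}\pmod 2.\nonumber
\end{equation}
Since $49n+47=7(7n+6)+5$, the coefficient of $q^n$ in $\sum_{n\ge0} p(49n+47)q^n$ equals the coefficient of $q^{7n+7}$ in $q\sum_m p(7m+5)q^m$. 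Hence, extracting from both sides the part supported on exponents divisible by $7$ and then re-indexing $q^{7}\mapsto q$, and using that $1/f_7=\sum_n p(n)q^{7n}$ re-indexes to $1/f_1$ while the ``multiples of $7$'' part of $1/f_1^{7}=\sum_n p_7(n)q^n$ re-indexes to $\sum_n p_7(7n)q^n$, one obtains
\begin{equation}
q\sum_{n\ge0}p(49n+47)q^n\equiv\frac{1}{f_1}+\sum_{n\ge0}p_7(7n)q^n\pmod 2.\nonumber
\end{equation}
After multiplying by $q^2$, the $q^2/f_1$ term of the statement is accounted for and the theorem becomes equivalent to
\begin{equation}
q^2\sum_{n\ge0}p_7(7n)q^n\equiv\frac{1}{f_1^{49}}+\frac{q}{f_1^{25}}+\frac{1}{f_7^{7}}\pmod 2,\nonumber
\end{equation}
which is a form of the (hitherto untreated) case $(a,t)=(7,7)$ of Conjecture \ref{biggerconj}, with all coefficients pinned down. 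This reframing also explains the shape of the answer: the $1/f_{49}$ term permitted a priori by Conjecture \ref{mainconj} cannot appear, since it would have to originate from a $1/f_{49}$ summand in the $(7,7)$ identity, which Conjecture \ref{biggerconj} does not allow.

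To prove the displayed congruence I would use the classical $7$-dissection of $1/f_1^{7}$. Concretely, I would reduce mod $2$ to $1/f_1^{7}\equiv f_1/f_8$, then combine the pentagonal-number expansion of $f_1$ --- whose $7$-dissection involves only the residue classes $0,1,2,5$ and whose nonzero components are standard $\eta$-quotients in $f_7$ and $f_{49}$ --- with the $q^8$-magnification of Ramanujan's $7$-dissection of $1/f_1$ (the computation underlying Watson's proof of $p(49n+47)\equiv 0\pmod{49}$; see also the accounts of Berndt and Hirschhorn), and collect the pairs of residue classes summing to $0$ modulo $7$. Re-indexing $q^{7}\mapsto q$ turns $f_7,f_{49}$ into $f_1,f_7$ and produces an explicit finite $\Z$-linear combination of $\eta$-quotients; reducing it mod $2$ by the repeated use of $f_d^{\,2}\equiv f_{2d}$ --- and the simplifications this forces, such as $f_1^{4}\equiv f_4$ and $f_1^{8}\equiv f_8$ --- one checks that everything collapses to precisely $f_1^{-49}+q\,f_1^{-25}+f_7^{-7}$, with the unwanted terms (in particular $q^2/f_1$ and $1/f_{49}$) cancelling. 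The residue is then a short list of $\eta$-product identities in $\F_2[[q]]$, each of which follows from Euler's pentagonal number theorem, Jacobi's identity $f_1^{3}\equiv\sum_{k\ge0}q^{k(k+1)/2}$, and the quintuple product identity; equivalently, one could instead work from the 7-dissections of $1/f_1^{4}$ and $1/f_1^{8}$ applied to the mod-$2$ reduction of Ramanujan's formula $\sum_m p(7m+5)q^m=7f_7^{3}/f_1^{4}+49\,q f_7^{7}/f_1^{8}$.

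The step I expect to be the main obstacle is carrying out this double $7$-dissection cleanly. The $7$-dissection of $1/f_1$ (equivalently, of $1/f_1^{4}$ and $1/f_1^{8}$) has all seven residue components present with genuinely $\eta$-quotient coefficients, and one must track the exponent arithmetic through the progression dictated by $49n+47=7(7n+6)+5$ and then verify that the mod-$2$ reduction collapses exactly onto the three-term right-hand side, neither more nor less. Everything downstream of that collapse is mechanical.
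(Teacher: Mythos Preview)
Your reduction to the case $(a,t)=(7,7)$ via the known $(7,1)$ identity is correct and conceptually attractive, and your observation that this forces the absence of a $1/f_{49}$ term is a nice structural insight not made explicit in the paper. However, your route diverges from the paper's at the main computational step. The paper does not compute the $7$-dissection of $1/f_1^{7}$ from scratch; instead it invokes Zuckerman's classical identity, which already furnishes $\sum_n p(49n+47)q^n$ as an explicit finite sum of $\eta$-quotients, reducing mod $2$ to $q^2 f_7^{12}/f_1^{13} + q^5 f_7^{24}/f_1^{25} + q^6 f_7^{28}/f_1^{29} + q^{13} f_7^{56}/f_1^{57}$. This factors mod $2$ as $q^2 f_1^3 y^4 + q^5 f_1^7 y^8$ with $y = f_7^3/f_1^4 + q f_7^7/f_1^8$, and Lin's identity $f_1 f_7 + f_1^8 \equiv q^2 f_7^8 + q f_1^4 f_7^4$ (equivalent, as the paper notes, to combining the $(7,1)$ case with Ramanujan's formula for $\sum p(7m+5)q^m$) yields $y \equiv q^{-1}(f_1^{-7} + f_7^{-1})$; substituting and expanding then gives the result in two lines, after one further application of Lin's identity to collapse $f_1^7/f_7^8 + q f_1^3/f_7^4$. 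What the paper's approach buys is that Zuckerman has already carried out the double $7$-dissection you correctly flag as the main obstacle; what your approach would buy, if executed, is a derivation not resting on Zuckerman's formula, at the cost of the dissection bookkeeping you anticipate. Your plan is sound in principle, but the second half remains a sketch rather than a proof.
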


\begin{proof}
We deduce, from Zuckerman's identity for $\sum_{n=0}^{\infty} p(49n+47)q^n$ (see \cite{Zuckerman}), the following congruence:
\begin{equation}\label{zuck}
\sum_{n=0}^{\infty} p(49n+47) q^n \equiv q^2\frac{f_7^{12}}{f_1^{13}}+q^6\frac{f_7^{28}}{f_1^{29}}+q^5\frac{f_7^{24}}{f_1^{25}}+q^{13}\frac{f_7^{56}}{f_1^{57}}.
\end{equation}
Since we are working modulo 2, observe that the right side of (\ref{zuck})  can be rewritten as
\begin{equation}\label{zuck2}
q^2f_1^3y^4+q^5f_1^7y^8,
\end{equation}
if we  set
\begin{equation}\label{y}
y=  \frac{f_7^3}{f_1^4}+q\frac{f_7^7}{f_1^8}.
\end{equation}

We now employ an algebraic result of Lin (\cite{Lin}, Equation 2.4), namely:
\begin{equation}\label{LIN}
f_1 f_7+f_1^8 \equiv q^2f_7^8+qf_1^4f_7^4.
\end{equation}
(Alternatively, (\ref{LIN}) can also be derived, after standard manipulations, from the case $(a,t)=(7,1)$ of (\ref{eq:conj}) along with Ramanujan's congruence for $\sum_{n=0}^{\infty} p(7n+5)q^n$; see \cite{JKZ} and \cite{BerndtOno}.) Dividing both sides of (\ref{LIN}) by $qf_1^8f_7$, (\ref{y}) easily becomes:
\begin{equation}\label{yy}
y\equiv \frac{q^{-1}}{f_1^7}+\frac{q^{-1}}{f_7}.
\end{equation}

Thus, substituting (\ref{yy}) into (\ref{zuck2}) gives:
$$\sum_{n=0}^{\infty} p(49n+47) q^n \equiv q^2f_1^3\left( \frac{q^{-1}}{f_1^7}+\frac{q^{-1}}{f_7} \right)^4+q^5f_1^7\left(\frac{q^{-1}}{f_1^7}+\frac{q^{-1}}{f_7} \right)^8.$$
Multiplication by $q^3$ on both sides and some simple algebra modulo 2 now yield:
\begin{equation}\label{almostThere}
q^3\sum_{n=0}^{\infty} p(49n+47) q^n \equiv \frac{1}{f_1^{49}}+\frac{q}{f_1^{25}}+\left(\frac{f_1^7}{f_7^8}+q\frac{f_1^3}{f_7^4}\right).
\end{equation}

Finally, we can divide both sides of (\ref{LIN}) by $f_1f_7^8$ and rearrange to obtain:
\begin{equation}\label{last}
\frac{f_1^7}{f_7^8}+q\frac{f_1^3}{f_7^4}\equiv \frac{q^2}{f_1}+\frac{1}{f_7^7}.
\end{equation}
Substituting (\ref{last}) into the right side of (\ref{almostThere}) completes the proof.
\end{proof}

\section{Acknowledgements} This work will be part of the first author's Ph.D. dissertation, written at Michigan Tech under the supervision of the second author. Both authors wish to thank William Keith for many helpful discussions. The final version of this paper was drafted during a visiting professorship of the second author at MIT in Fall 2017, for which he is grateful to Richard Stanley. He was also partially supported by a Simons Foundation grant (\#274577). Finally, the second author would like to acknowledge that the order is not only alphabetical: The first author has contributed for well over $50\%$ to the research of this paper.

\end{document}